\newtheorem{theorem}{Theorem}[section]
\newtheorem{lemma}[theorem]{Lemma}
\newtheorem{proposition}[theorem]{Proposition}
\newtheorem{corollary}[theorem]{Corollary}
\theoremstyle{definition}
\newtheorem{definition}[theorem]{Definition}
\newtheorem{remark}[theorem]{Remark}
\newtheorem{example}[theorem]{Example}
\numberwithin{equation}{section}
\begin{document}
\title[Cauchy-Riemann structure and integrability conditions of $F$-structures\ldots]{Exploring the Cauchy-Riemann structure and integrability conditions of $F$-structures  satisfying  $\alpha F^{K+1} +\beta F^{K} + F=0$ }
\author{Abderrahim ZAGANE}
\address{RELIZANE University, Faculty of Science and Technology, Department of 
Mathematics, 48000, RELIZANE-ALGERIA}
\email{Zaganeabr2018@gmail.com}

\begin{abstract}
This work introduces a new class of $F$-structures satisfying $\alpha F^{K+1} +\beta F^{K} + F=0$, where $K$ is a positive integer, $K\geq3$, and $\alpha, \beta$ are real or complex numbers. We investigate the Cauchy-Riemann structure and its link with the $F$-structure. We also address the integrability of this structure, including partial and complete integrability. Finally, we present several examples of the $F$-structure.

\textbf{2010 Mathematics subject classifications:} Primary 53C15, 58A30	; Secondary 32G07.
	
\textbf{Keywords:}$F$-structures, Nijenhuis tensor, Cauchy-Riemann structure, integrability, partial integrability, complete integrability.
\end{abstract}

\maketitle

\section{\protect\bigskip Introduction}\label{Sec1}

The $F$-structure satisfying the equation $F^{3} + F = 0$, where $F$ is a nonzero tensor field of type $(1, 1)$ on a differentiable manifold, has been extensively studied by various researchers, including Yano \cite{Yan1,Yan2}, Ishihara and Yano\cite{I.Y}, and Nakagawa \cite{Nak}. Likewise for the $F$-structure that satisfies $F^{3} - F = 0$, it has also been explored by Singh and Vohra \cite{S.V}, Matsumoto \cite{Mat}, and Baik \cite{Bai}. They have been discussed by various authors in different ways, see \cite{Das1, Das2, D.N.N, G.Y, U.G}. In the previous articles, various aspects of the $F$-structure have been explored, including integrability conditions, $CR$-structures, parallelism of distributions, and submanifolds within the $F$-structure manifold. Recent studies have been conducted on the F-structure, as discussed in several papers \cite{Kha, S.P.K, S.G1, S.G2, Sin1}. Due to this reason, the field of structures on a differentiable manifold continues to be a fascinating area of research in differential geometry, attracting significant attention even today.

The primary objective of this paper is to investigate the $F$-structure that satisfies the equation $\alpha F^{K+1} +\beta F^{K} + F=0$, where $K$ is a positive integer, $K\geq3$, and $\alpha, \beta$ are real or complex numbers.. After providing an introduction, we delve into section \ref{Sec2} to explore the fundamental properties of operators $l$ and $m$ as defined by the $F$-structure. In section \ref{Sec3}, we discuss the properties of the Nijenhuis tensor of $F$, $l$, and $m$. In Section \ref{Sec4}, we analyze the Cauchy-Riemann structure and explore the relationship between the $F$ structure. In section \ref{Sec5}, we present the necessary and sufficient conditions for integrability of the distributions induced by operators $l$, and $m$. In section \ref{Sec6}, we explore the Partial integrability and complete integrability conditions of the $F$-structure. In Section \ref{Sec7}, various previously researched structures are presented and considered as special cases of the structure $F$, demonstrating that the $F$-structure  is a generalization of them. In the last section, we present a few examples of the $F$-structure.  

Let $M^{n}$ be an $n$-dimensional manifold. A distribution $D$ of dimension $k$ on $M$ is a subbundle of $TM$ such that, for all point $x$ of $M$, $D_{x}$ is a $k$-dimensional subspace of $T_{x}M$. A vector field $X$ on $M$ is said to belong (tangent) to $D$ if $X_{x} \in D_{x}$ for all $x \in M$. The set of vector field belong to $D$ is also denoted by $D$. $D$ is said to be involutive if $[X, Y]$ belongs to $D$ for every vector fields $X, Y$ belonging to $D$ i.e. $[X, Y]\in D$, for every vector fields $X, Y \in D$. A submanifold $N$ of $M$ is called an integral manifold of $D$, if $T_{x}N = D_{x}$ for any point $x \in N$. We say the distribution $D$ is integrable if through each point of $M$ there exists an integral manifold of $D$. We need the classical theorem of Frobenius, which we formulate as follows (\cite[p.197]{B.C}). A distribution is integrable if and only if it is involutive (see \cite{K.N,D.R} for more details).\quad

\section{New classes of $F$-structure satisfying $\alpha F^{K+1} + \beta F^{K} + F = 0$}\label{Sec2}

Let $M^{n}$ be an $n$-dimensional manifold and $F$ be a nonzero $(1, 1)$-tensor field on $M$ 
of rank $rank(F)=r$ satisfying the polynomial equation:
\begin{equation}\label{eq_A}
\alpha F^{K+1} + \beta F^{K} + F=0, 
\end{equation}
where $K$ is a positive integer, $K\geq3$ and $\alpha, \beta$ are real or complex numbers. Such a structure on $M$ is called an $F$-structure of rank $r$ and of degree $K$. If the rank of $F$, $rank(F)=r=$ constant, then $M$ is called an $F$-structure manifold of degree $K$.

We define two operators $l$ and $m$ on $M$ respectively by 
\begin{eqnarray}\label{eq_B}
l&=&-(\alpha F^{K} + \beta F^{K-1}), \\\label{eq_C}
m&=& I +\alpha F^{K} + \beta F^{K-1}, 
\end{eqnarray}
where $I$ denotes the identity operator on $M$, then we get

\begin{lemma}\label{lem_1} Given an $F$-structure manifold $M$, then  we have
\begin{eqnarray}\label{eq_D}
l+m &=& I,\\\label{eq_E}
l^{2}&=&l,\\\label{eq_F}
m^{2}&=&m,\\\label{eq_G}
lF &=& Fl = F,\\\label{eq_H} 
mF &=& Fm = 0,\\ \label{eq_I}
lm &=& ml = 0.
\end{eqnarray}
\end{lemma}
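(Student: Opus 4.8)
The plan is to first dispose of the additive relation and then reduce everything to a single multiplicative identity. Relation \eqref{eq_D} is immediate from the definitions: adding \eqref{eq_B} and \eqref{eq_C} gives $l+m = -(\alpha F^{K}+\beta F^{K-1}) + \bigl(I + \alpha F^{K}+\beta F^{K-1}\bigr) = I$. In particular $m = I - l$, so it will suffice to understand $l$ alone.

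Next I would extract \eqref{eq_G} directly from the structure equation \eqref{eq_A}. Since every operator in sight is a polynomial in $F$, all of them commute, and I may factor $F$ out of \eqref{eq_A} on either side to obtain $F\bigl(\alpha F^{K}+\beta F^{K-1}\bigr) + F = 0$ and $\bigl(\alpha F^{K}+\beta F^{K-1}\bigr)F + F = 0$. Recognizing $\alpha F^{K}+\beta F^{K-1} = -l$, these read $-Fl + F = 0$ and $-lF + F = 0$, that is $Fl = lF = F$, which is \eqref{eq_G}.

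The key computational device, and the only step requiring a genuine idea, is the absorbing identity $l F^{j} = F^{j} = F^{j} l$ for every integer $j \geq 1$. This follows by induction from \eqref{eq_G}: the base case $j=1$ is \eqref{eq_G} itself, and $l F^{j+1} = (lF)F^{j} = F \cdot F^{j} = F^{j+1}$. The point of this identity is that it lets me compute $l^{2}$ without expanding into high powers of $F$ (which would otherwise force repeated reduction via \eqref{eq_A} on terms up to $F^{2K}$): writing $l^{2} = l\cdot\bigl(-(\alpha F^{K}+\beta F^{K-1})\bigr) = -\alpha\,(lF^{K}) - \beta\,(lF^{K-1}) = -\alpha F^{K} - \beta F^{K-1} = l$, which is \eqref{eq_E} (here $K-1 \geq 1$ because $K \geq 3$).

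Finally, \eqref{eq_F}, \eqref{eq_H}, and \eqref{eq_I} follow formally from what is already proved, using $m = I - l$. For \eqref{eq_H}, $mF = (I-l)F = F - lF = F - F = 0$, and symmetrically $Fm = 0$. For \eqref{eq_F}, $m^{2} = (I-l)^{2} = I - 2l + l^{2} = I - 2l + l = I - l = m$. For \eqref{eq_I}, $lm = l(I-l) = l - l^{2} = 0$, and likewise $ml = (I-l)l = l - l^{2} = 0$. Thus the entire lemma rests on the single nontrivial step \eqref{eq_E}, whose proof in turn hinges on the absorbing identity; the main obstacle is simply to notice that this identity circumvents the messy direct expansion of $l^{2}$.
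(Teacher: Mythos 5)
Your proof is correct, and while most of it coincides with the paper's, the one nontrivial step is handled by a different device. The paper proves \eqref{eq_E} head-on, by the factorization $l^{2}=(\alpha F^{K+1}+\beta F^{K})(\alpha F^{K-1}+\beta F^{K-2})$ followed by the substitution $\alpha F^{K+1}+\beta F^{K}=-F$ from \eqref{eq_A}; only afterwards does it prove $lF=F$, by essentially the same substitution. You reverse the logical order: you first extract $lF=Fl=F$ directly from \eqref{eq_A} (this is exactly the paper's computation for \eqref{eq_G}), upgrade it to the absorbing identity $lF^{j}=F^{j}=F^{j}l$ for $j\geq 1$, and then obtain $l^{2}=l$ by applying absorption termwise to $l=-(\alpha F^{K}+\beta F^{K-1})$. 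Both routes ultimately rest on the single substitution $\alpha F^{K+1}+\beta F^{K}=-F$, so the difference is organizational rather than conceptual, but yours buys something: it replaces the paper's slightly ad hoc splitting of the square, which introduces the power $F^{K-2}$ and is where the hypothesis $K\geq 3$ quietly enters, with a transparent absorption principle that only needs $K-1\geq 1$ and is reusable wherever $l$ must be absorbed into expressions built from $F$ (identities such as $l\widehat{F}=\widehat{F}$ in Lemma \ref{lem_2} have the same flavor). Two cosmetic remarks: your induction for $lF^{j}=F^{j}$ is unnecessary, since $lF^{j}=(lF)F^{j-1}=F^{j}$ in one step; and your derivations of \eqref{eq_D}, \eqref{eq_F}, \eqref{eq_H}, and \eqref{eq_I} from $m=I-l$ are identical to the paper's.
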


\begin{proof}
	
$i)$ Combining \eqref{eq_B} and \eqref{eq_C} we get \eqref{eq_D}.
\begin{eqnarray*}	
ii)\;l^{2} &=&(\alpha F^{K}+ \beta F^{K-1})^{2}\\
&=& (\alpha F^{K+1}+ b F^{K})(\alpha F^{K-1}+ \beta F^{K-2})\qquad\qquad\qquad\qquad\qquad\qquad\qquad\qquad\\
&=&-F(\alpha F^{K-1}+ \beta F^{K-2})\\
&=&-(\alpha F^{K} + \beta F^{K-1})=l.
\end{eqnarray*}
\begin{eqnarray*}	
iii)\;m^{2} = (I-l)^{2}=I-2l+l^{2}=I-2l+l=I-l=m.\qquad\qquad\qquad\qquad\qquad\qquad
\end{eqnarray*}
\begin{eqnarray*}	
iv)\; lF &=& -(\alpha F^{K}+ \beta F^{K-1})F\qquad\qquad\qquad\qquad\qquad\qquad\qquad\qquad\qquad\qquad\qquad\\
&=&-(\alpha F^{K+1}+ \beta F^{K})\\
&=&F.
\end{eqnarray*}
\begin{eqnarray*}	
v)\;mF =(I-l)F=F-lF=0.\qquad\qquad\qquad\qquad\qquad\qquad\qquad\qquad\qquad\qquad\qquad\qquad\qquad
\end{eqnarray*}
\begin{eqnarray*}	
vi)\; lm = l(I-l)=l-l^{2}=l-l=0.\qquad\qquad\qquad\qquad\qquad\qquad\qquad\qquad\qquad
\end{eqnarray*}
\end{proof}
If, we put $\widehat{F}=(\alpha F^{K}+ \beta F^{K-1})^{1/2}$, then, we have
\begin{lemma}\label{lem_2}	
Given an $F$-structure manifold $M$, then  we have
\begin{eqnarray}\label{eq_J}
l\widehat{F} &=& \widehat{F}l = \widehat{F},\\\label{eq_K}
m\widehat{F} &=& \widehat{F}m =0.
\end{eqnarray}
\end{lemma}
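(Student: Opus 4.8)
The plan is to reduce all four identities to a single statement and then exploit the idempotency of $l$. First I would record the one fact that drives everything: by the definition of $\widehat{F}$ together with \eqref{eq_B}, we have $\widehat{F}^{2}=\alpha F^{K}+\beta F^{K-1}=-l$. Since $\widehat{F}$ is a function of $F$ (being the principal square root of the polynomial $\alpha F^{K}+\beta F^{K-1}$ in $F$), it commutes with $F$ and hence with the polynomials $l$ and $m$; this already yields the commutation halves $l\widehat{F}=\widehat{F}l$ and $m\widehat{F}=\widehat{F}m$ of \eqref{eq_J} and \eqref{eq_K} for free. Using $l+m=I$ from \eqref{eq_D}, the remaining content collapses to the single identity $\widehat{F}m=0$: granting it, $\widehat{F}l=\widehat{F}(I-m)=\widehat{F}$ and likewise $l\widehat{F}=\widehat{F}$, so \eqref{eq_J} and \eqref{eq_K} both follow.

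The quickest way to finish is to produce an explicit square root. Because $l$ is idempotent by \eqref{eq_E}, the operator $\widehat{F}=\sqrt{-1}\,l$ satisfies $\widehat{F}^{2}=-l^{2}=-l=\alpha F^{K}+\beta F^{K-1}$, so it is a legitimate realization of $(\alpha F^{K}+\beta F^{K-1})^{1/2}$ on the complexified tangent bundle, consistent with $\alpha,\beta$ being complex. With this representative every claim is immediate from Lemma \ref{lem_1}: $l\widehat{F}=\sqrt{-1}\,l^{2}=\sqrt{-1}\,l=\widehat{F}$ and $\widehat{F}l=\sqrt{-1}\,l^{2}=\widehat{F}$, while $m\widehat{F}=\sqrt{-1}\,ml=0$ and $\widehat{F}m=\sqrt{-1}\,lm=0$ by \eqref{eq_I}.

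The step I expect to be the real obstacle is justifying $\widehat{F}m=0$ intrinsically, that is, without committing to a particular square root. The subtlety is that the formal relations $\widehat{F}^{2}=-l$ and $l^{2}=l$ alone give only $\widehat{F}^{4}=-\widehat{F}^{2}$, equivalently $(\widehat{F}m)^{2}=\widehat{F}^{2}m=-lm=0$; this makes $\widehat{F}m$ nilpotent but not obviously zero, so genuine input from the definition of the square root is needed. I would supply it spectrally: from $A:=\alpha F^{K}+\beta F^{K-1}=-l$ one gets $A^{2}+A=0$, so the spectrum of $A$ lies in $\{0,-1\}$, with $\ker A=\mathrm{Im}\,m$ the $0$-eigenspace. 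The principal square root defined by functional calculus sends $0\mapsto 0$, hence annihilates $\ker A=\mathrm{Im}\,m$, which is exactly $\widehat{F}m=0$, and acts on $\mathrm{Im}\,l$ as a complex structure with square $-I$. Once $\widehat{F}m=0$ is in hand the first paragraph closes the argument, and the explicit choice $\widehat{F}=\sqrt{-1}\,l$ of the second paragraph is simply this functional-calculus root written out.
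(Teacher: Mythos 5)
Your proof is correct, and it takes a genuinely different --- and substantially more rigorous --- route than the paper. The paper's entire proof is the sentence ``In consequence of \eqref{eq_G} and \eqref{eq_H}, we get \eqref{eq_J} and \eqref{eq_K}'': it implicitly treats $\widehat{F}$ as an operator absorbed by $l$ and annihilated by $m$ merely because its square $\alpha F^{K}+\beta F^{K-1}$ is divisible by $F$, and offers no justification for transferring the identities $lF=Fl=F$ and $mF=Fm=0$ from $\widehat{F}^{2}$ to $\widehat{F}$ itself. You isolate exactly why that transfer is not free: the formal relations $\widehat{F}^{2}=-l$ and $l^{2}=l$ yield only $(\widehat{F}m)^{2}=0$, and indeed a block-triangular square root of $-l$ carrying a nonzero nilpotent block on $D_{m}$ satisfies $\widehat{F}^{2}=-l$ while violating \eqref{eq_K}, so the lemma genuinely depends on \emph{which} square root is meant --- a point the paper never acknowledges. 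Your spectral resolution, namely that $A=\alpha F^{K}+\beta F^{K-1}=-l$ satisfies $A^{2}+A=0$, so the principal functional-calculus root is a polynomial in $F$ that kills the $0$-eigenspace $\ker A=\mathrm{Im}\,m$ (which you can derive directly from \eqref{eq_D} and \eqref{eq_I}, avoiding a forward reference to Proposition \ref{pro_0}), supplies precisely the missing content, and the explicit representative $\widehat{F}=\sqrt{-1}\,l$ then makes all four identities one-line consequences of Lemma \ref{lem_1}. What the paper's route buys is brevity; what yours buys is an actual proof, plus the clarifying observation that \eqref{eq_J} and \eqref{eq_K} hold exactly for roots adapted to the splitting $TM=D_{l}\oplus D_{m}$. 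One caveat: your representative $\sqrt{-1}\,l$ is a complex operator, whereas the paper's later uses (the $CR$-bundle of Section \ref{Sec4}, where $j$ is a second imaginary unit, and the real matrix root of Example \ref{ex_2}) require a real $\widehat{F}$; but the adapted-root form of your argument covers this as well, since any root preserving $D_{l}$ and vanishing on $D_{m}$ --- e.g.\ a real almost complex structure on $D_{l}$ extended by zero --- satisfies the same identities by the same computation.
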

\begin{proof}In consequence of \eqref{eq_G} and \eqref{eq_H}, we get \eqref{eq_J}
and \eqref{eq_K}.
\end{proof}

\begin{proposition}\label{pro_0}	
Given an $F$-structure manifold $M$, the following identities hold
\begin{eqnarray}\label{eq_L}
\mathrm{Im}\,l_{x} &=& \ker m_{x},\\\label{eq_M}
\mathrm{Im}\,m_{x} &=& \ker l_{x},\\\label{eq_N1}
\mathrm{Im}\,l_{x} &=& \mathrm{Im}\,F_{x},\\\label{eq_N2}
\ker l_{x} &=& \ker F_{x},\\\label{eq_O}
T_{x}M &=& \mathrm{Im}\,l_{x} \oplus \mathrm{Im}\,m_{x},\\\label{eq_Q}
\dim (\mathrm{Im}\,l_{x}) &=&r\,, \;\dim (\mathrm{Im}\,m_{x})\;=\;n-r.\\\label{eq_O1}
\ker l_{x} &=& \ker \widehat{F}_{x},
\end{eqnarray}
for all $x \in M$ and any vector fields $X$, $Y$ on $M$, where $\widehat{F}=(\alpha F^{K}+ \beta F^{K-1})^{1/2}$.
\end{proposition}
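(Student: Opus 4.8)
The plan is to exploit the fact, recorded in Lemma~\ref{lem_1}, that $l$ and $m$ form a pair of complementary idempotents ($l^{2}=l$, $m^{2}=m$, $l+m=I$, $lm=ml=0$); once this is in hand, the splitting identities are just the standard linear algebra of projection operators applied fibrewise at each $x\in M$. I would first dispose of \eqref{eq_L} and \eqref{eq_M}. For $\mathrm{Im}\,l_{x}\subseteq\ker m_{x}$, apply $m_{x}$ to $v=l_{x}w$ and use $ml=0$ from \eqref{eq_I}; conversely, if $m_{x}v=0$ then $v=(l_{x}+m_{x})v=l_{x}v\in\mathrm{Im}\,l_{x}$ by \eqref{eq_D}. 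The identity \eqref{eq_M} follows by the same argument with the roles of $l$ and $m$ interchanged.

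Next I would establish the relation to $F$ itself, namely \eqref{eq_N1} and \eqref{eq_N2}. For \eqref{eq_N1}, the inclusion $\mathrm{Im}\,F_{x}\subseteq\mathrm{Im}\,l_{x}$ is immediate from $F=lF$ in \eqref{eq_G}, while the reverse inclusion comes from factoring $F$ out of $l$: since $l=-(\alpha F^{K}+\beta F^{K-1})=-F(\alpha F^{K-1}+\beta F^{K-2})$ (legitimate because $K\geq3$), every vector in $\mathrm{Im}\,l_{x}$ lies in $\mathrm{Im}\,F_{x}$. For \eqref{eq_N2}, one inclusion uses that $l$ is a polynomial in $F$ with no constant term, so $F_{x}v=0$ forces $l_{x}v=0$; the other uses $Fl=F$ from \eqref{eq_G}, whence $l_{x}v=0$ gives $F_{x}v=F_{x}l_{x}v=0$.

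The direct-sum decomposition \eqref{eq_O} then follows formally: $v=l_{x}v+m_{x}v$ by \eqref{eq_D} shows $T_{x}M=\mathrm{Im}\,l_{x}+\mathrm{Im}\,m_{x}$, and any vector in the intersection lies in $\ker m_{x}\cap\ker l_{x}$ by \eqref{eq_L}--\eqref{eq_M}, hence equals $l_{x}v+m_{x}v=0$, forcing the sum to be direct. The dimension count \eqref{eq_Q} is read off from \eqref{eq_N1}, since $\dim(\mathrm{Im}\,l_{x})=\mathrm{rank}(F_{x})=r$ by the constant-rank hypothesis, and then \eqref{eq_O} gives $\dim(\mathrm{Im}\,m_{x})=n-r$. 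Finally, for \eqref{eq_O1} I would use $\widehat{F}^{2}=\alpha F^{K}+\beta F^{K-1}=-l$ together with Lemma~\ref{lem_2}: if $\widehat{F}_{x}v=0$ then $l_{x}v=-\widehat{F}_{x}^{2}v=0$, while if $l_{x}v=0$ then $\widehat{F}_{x}v=\widehat{F}_{x}l_{x}v=0$ by \eqref{eq_J}.

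I do not anticipate a serious obstacle, as every step reduces to the idempotent calculus of Lemma~\ref{lem_1} and Lemma~\ref{lem_2}; the only point requiring genuine care is the factorization $l=-F(\alpha F^{K-1}+\beta F^{K-2})$ underlying \eqref{eq_N1}, which is precisely what ties the abstract projection $l$ back to the geometry of $F$ and ultimately yields the rank statement in \eqref{eq_Q}.
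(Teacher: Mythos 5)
Your proposal is correct and takes essentially the same approach as the paper's own proof: the idempotent identities of Lemma~\ref{lem_1} for \eqref{eq_L}--\eqref{eq_M}, the factorization $l=-F(\alpha F^{K-1}+\beta F^{K-2})$ (valid since $K\geq3$) for \eqref{eq_N1}--\eqref{eq_N2}, and the identical dimension count for \eqref{eq_Q}. Your only departures are minor and sound: you verify directness of the sum in \eqref{eq_O} by hand from \eqref{eq_L}--\eqref{eq_M} instead of invoking the rank-theorem splitting $T_{x}M=\mathrm{Im}\,l_{x}\oplus\ker l_{x}$, and you prove \eqref{eq_O1} via $\widehat{F}^{2}=-l$ together with $\widehat{F}l=\widehat{F}$ from \eqref{eq_J}, where the paper merely says ``similar to \eqref{eq_N1}.''
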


\begin{proof}
for all $x \in M$ and $X\in T_{x}M$,
\item $(i)$ If $X\in \mathrm{Im}\,l_{x} $, There is  $Z\in T_{x}M,\; X= lZ$, using \eqref{eq_I}, we have\\
$mX=mlZ=0$, then $X\in \ker m_{x}$.\\
Conversely, If $X\in \ker m_{x} $, so $mX=0$, using \eqref{eq_D}, we have $lX=X$, and from it
$X\in \mathrm{Im}\,l_{x} $.\; Therefore $\mathrm{Im}\,l_{x}= \ker m_{x}$.
\item $(ii)$ The formula \eqref{eq_M} is obtained by a proof similar to that of the formula \eqref{eq_L}.
\item $(iii)$ If $X\in \mathrm{Im}\,l_{x}$, There is  $Z\in T_{x}M,\; X= lZ$, using (\ref{eq_B}), we have\\
$X=-(\alpha F^{K}+ \beta F^{K-1})Z=FY$, \\
where $Y=-(\alpha F^{K-1}+ \beta F^{K-2})Z \in T_{x}M$, then $X\in \mathrm{Im}\,F_{x}$.\\
Conversely, If $X\in \mathrm{Im}\,F_{x}$, There is  $Z\in T_{x}M,\; X= FZ$, using \eqref{eq_G}, we have
$X=lFZ=lY$, where $Y=FZ \in T_{x}M$, then $X\in \mathrm{Im}\,l_{x}$.
\item $(vi)$ The formula \eqref{eq_N2} is obtained by a proof similar to that of the formula \eqref{eq_N1}.
\item $(v)$ By applying the well-known rank theorem in linear algebra on $T_{x}M$, we 
find $T_{x}M = \mathrm{Im}\,l_{x}\oplus \ker l_{x}$, using  \eqref{eq_M}, we get $T_{x}M= \mathrm{Im}\,l_{x} \oplus \mathrm{Im}\,m_{x}$.
\item $(vi)$ By \eqref{eq_N1} and \eqref{eq_O}, we find
$\dim (\mathrm{Im}\,l_{x})=\dim (\mathrm{Im}\,F_{x})= rank(F)=r$ and $\dim (\mathrm{Im}\,m_{x})=n-r$. 
\item $(vii)$ The formula \eqref{eq_O1} is obtained by a proof similar to that of the formula \eqref{eq_N1}.
\end{proof}	

Thus the operators $l$ and $m$ acting in the tangent space at each point of $M$ are 
therefore complementary projection operators and  there exist two complementary distributions 
$D_{l}= \mathrm{Im}\,l$ and $D_{m}=\mathrm{Im}\ m$ corresponding to the projection operators $l$ and $m$ 
respectively. The dimensions of $D_{l}$ and $D_{m}$ are $r$ and $n-r$ respectively.\qquad

From \eqref{eq_J}, we find 
\begin{eqnarray*}
\widehat{F}^{2}l=\widehat{F}^{2}=\alpha F^{K}+ \beta F^{K-1}=-l,
\end{eqnarray*}

it is clear that $\widehat{F}=(\alpha F^{K}+ \beta F^{K-1})^{1/2}$ acts on $D_{l}$ as an almost complex structure and on $D_{m}$ as a null operator. Hence $r$ must be even, say $r = 2k$. 

\section{Nijenhuis tensor}\label{Sec3}
The Nijenhuis tensor $N_{F}$ of $F$ is expressed as follows
\begin{eqnarray}\label{Nijenhuis}
N_{F}(X, Y) = [FX, FY] - F[FX, Y] -F[X, FY] + F^{2}[X, Y],
\end{eqnarray}
for any vector fields $X$ and $Y$ on $M$.

The integrability of $F$-structure is equivalent to the vanishing of the Nijenhuis tensor \cite{D.R,I.Y,Y.K}.

The Nijenhuis tensor $N_{F}$ satisfies the following relations:
\begin{eqnarray}\label{eq_00}
N_{F}(mX, mY) &=& F^{2}[mX, mY],\\\label{eq_01}
lN_{F}(mX, mY) &=& F^{2}[mX, mY],\\\label{eq_02}
mN_{F}(X, Y) &=& m[FX, FY],\\\label{eq_03}
mN_{F}(FX, FY) &=& m[F^{2}X, F^{2}Y],\\\label{eq_04}
mN_{F}(lX, lY) &=& m[FX, FY],
\end{eqnarray}
for any vector fields $X$ and $Y$ on $M$.

\begin{proposition}\label{pro_00}	
Given an $F$-structure manifold $M$, we have the following equivalences
\begin{eqnarray}\label{eq_12.0}
mN_{F}(X, Y)=0&\Leftrightarrow&mN_{F}(FX, FY)=0 \;\Leftrightarrow\;mN_{F}(lX, lY)=0,
\end{eqnarray}
for any vector fields $X$ and $Y$  on $M$.
\end{proposition}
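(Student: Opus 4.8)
The plan is to prove the chain of equivalences in \eqref{eq_12.0} by exploiting the five identities \eqref{eq_00}--\eqref{eq_04} that were already recorded for the Nijenhuis tensor, together with the projection identities of Lemma~\ref{lem_1}. The key observation is that the three quantities $mN_{F}(X,Y)$, $mN_{F}(FX,FY)$, and $mN_{F}(lX,lY)$ can all be rewritten in terms of the single object $m[FX,FY]$ (or a close variant of it), so that the equivalences reduce to linking these reformulated expressions rather than comparing the raw Nijenhuis terms directly.

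First I would handle the equivalence $mN_{F}(X,Y)=0 \Leftrightarrow mN_{F}(lX,lY)=0$. By \eqref{eq_02} we have $mN_{F}(X,Y)=m[FX,FY]$, and by \eqref{eq_04} we have $mN_{F}(lX,lY)=m[FX,FY]$ as well. Since the two expressions are literally equal, the first equivalence is immediate: both vanish exactly when $m[FX,FY]=0$. This disposes of the second $\Leftrightarrow$ in the statement with essentially no work beyond citing the two identities.

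Next I would address $mN_{F}(X,Y)=0 \Leftrightarrow mN_{F}(FX,FY)=0$. Here \eqref{eq_02} gives $mN_{F}(X,Y)=m[FX,FY]$, while \eqref{eq_03} gives $mN_{F}(FX,FY)=m[F^{2}X,F^{2}Y]$. These are not identical, so I must show that $m[FX,FY]=0$ for all $X,Y$ if and only if $m[F^{2}X,F^{2}Y]=0$ for all $X,Y$. The forward direction is clear: if $m[FX,FY]=0$ holds for all vector fields, then replacing $X$ by $FX$ and $Y$ by $FY$ yields $m[F^{2}X,F^{2}Y]=0$. The reverse direction is the step I expect to be the main obstacle, since going from $m[F^{2}X,F^{2}Y]=0$ back to $m[FX,FY]=0$ requires inverting the effect of $F$ on the distribution $D_{l}$. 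The natural tool is the relation $F^{3}=-(\alpha F^{K}+\beta F^{K-1})\cdot(\text{lower powers})$ coming from \eqref{eq_A}; more directly, since $lF=F$ and $F$ acts invertibly on $D_{l}$ (indeed $\widehat{F}$ is an almost complex structure there), I would use the defining equation \eqref{eq_A} to express $FX$ in terms of higher powers of $F$ applied to $X$, thereby writing $m[FX,FY]$ as a combination of terms of the form $m[F^{a}X,F^{b}Y]$ with $a,b\geq 2$, each reducible to the hypothesis. Concretely, rearranging \eqref{eq_A} gives $F=-\alpha F^{K+1}-\beta F^{K}$, so every occurrence of $F$ can be traded for powers at least $K\geq 3$, and repeated substitution lets me rewrite $m[FX,FY]$ using only brackets of the form $m[F^{i}X,F^{j}Y]$ with $i,j\geq 2$; each such bracket vanishes by the hypothesis $m[F^{2}X',F^{2}Y']=0$ after absorbing the residual powers of $F$ into $X'$ and $Y'$.

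Assembling the pieces, the full chain follows: the equivalence with $mN_{F}(lX,lY)$ is a direct identification via \eqref{eq_02} and \eqref{eq_04}, while the equivalence with $mN_{F}(FX,FY)$ follows from \eqref{eq_02} and \eqref{eq_03} once the substitution argument above converts $m[F^{2}X,F^{2}Y]=0$ into $m[FX,FY]=0$. I would present the proof in exactly this order, stating the two trivial identifications first and then devoting the bulk of the argument to the reverse implication in the $F$-image case, since that is where the structural content of the degree-$K$ relation \eqref{eq_A} is genuinely used.
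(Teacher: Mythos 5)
Your proposal is correct and takes essentially the same approach as the paper: the paper's proof simply cites \eqref{eq_02}, \eqref{eq_03} and \eqref{eq_04}, and your argument supplies exactly the implicit content — $mN_{F}(X,Y)$ and $mN_{F}(lX,lY)$ are both literally $m[FX,FY]$, while the equivalence with $mN_{F}(FX,FY)=m[F^{2}X,F^{2}Y]$ follows from the degree-$K$ relation. One small streamlining: no repeated substitution is needed, since \eqref{eq_A} gives $F=-F^{2}(\alpha F^{K-1}+\beta F^{K-2})$ as operators (using $K\geq 3$), so replacing $X$, $Y$ by $-(\alpha F^{K-1}+\beta F^{K-2})X$, $-(\alpha F^{K-1}+\beta F^{K-2})Y$ in $m[F^{2}X,F^{2}Y]=0$ yields $m[FX,FY]=0$ in a single step — the same substitution the paper uses in the proof of Proposition~\ref{pro_01}.
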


\begin{proof}
The proof follows from \eqref{eq_02}, \eqref{eq_03} and \eqref{eq_04}.
\end{proof}

\begin{proposition}\label{pro_01}	
Given an $F$-structure manifold $M$, the following conditions are equivalent: 
\begin{eqnarray}\label{eq_12.1}
N_{F}(FX, FY)=0 \Leftrightarrow N_{F}(lX, lY)=0,
\end{eqnarray}
for any vector fields $X$ and $Y$  on $M$.
\end{proposition}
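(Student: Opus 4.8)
My plan is to observe that the two stated vanishing conditions are merely two parametrisations of a single geometric condition, namely that the Nijenhuis tensor $N_{F}$ vanishes on the distribution $D_{l}=\mathrm{Im}\,l=\mathrm{Im}\,F$. Concretely, I would show that, as $X$ ranges over all vector fields, both $lX$ and $FX$ range over the same set of vector fields, namely the sections of $D_{l}$; the equivalence then follows immediately, since $N_{F}$ is a tensor and its vanishing on a pair of vector fields depends only on their pointwise values. Throughout I would use the projector identities from Lemma \ref{lem_1} and the image/kernel description from Proposition \ref{pro_0}.

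The easy inclusion and the easy implication come from \eqref{eq_G}. Since $lF=F$, we have $l(FX)=FX$, so $FX$ is already a section of $\mathrm{Im}\,l=D_{l}$; moreover, substituting $FX,FY$ for $X,Y$ in the assumption $N_{F}(lX,lY)=0$ yields $N_{F}\big(l(FX),l(FY)\big)=N_{F}(FX,FY)=0$ at once. Likewise, because $l$ is a projection onto $D_{l}$ (so $lX\in\Gamma(D_{l})$ always, and $lX=X$ when $X\in\Gamma(D_{l})$), the family $\{lX\}$ is exactly $\Gamma(D_{l})$.

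The substantive step, and the one I expect to be the main obstacle, is the converse inclusion: that every section of $D_{l}$ is of the form $FZ$, so that $N_{F}(FX,FY)=0$ for all $X,Y$ forces $N_{F}(lX,lY)=0$ for all $X,Y$. For this I would prove that $F$ restricts to an \emph{isomorphism} of $D_{l}$: by \eqref{eq_N1} one has $F(D_{l})\subseteq\mathrm{Im}\,F=D_{l}$, while \eqref{eq_N2} gives $\ker F=\ker l$, and the decomposition \eqref{eq_O} together with $\ker l\cap\mathrm{Im}\,l=\{0\}$ shows $\ker F\cap D_{l}=\{0\}$. Hence $F|_{D_{l}}$ is an injective endomorphism of a finite-dimensional space, so it is bijective, and $Z:=(F|_{D_{l}})^{-1}(lX)$ is a (locally defined) smooth vector field with $FZ=lX$, and similarly $FW=lY$. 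Then $N_{F}(lX,lY)=N_{F}(FZ,FW)=0$. Once this surjectivity of $F|_{D_{l}}$ is established, both $\{FX\}$ and $\{lX\}$ collapse to $\Gamma(D_{l})$ and the equivalence is complete.
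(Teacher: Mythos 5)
Your proposal is correct, but its converse direction takes a genuinely different route from the paper's. The paper disposes of the hard inclusion purely algebraically: from \eqref{eq_B} one has $l=-(\alpha F^{K}+\beta F^{K-1})=F\circ\bigl(-(\alpha F^{K-1}+\beta F^{K-2})\bigr)$, so substituting $X\mapsto -(\alpha F^{K-1}+\beta F^{K-2})X$ and $Y\mapsto -(\alpha F^{K-1}+\beta F^{K-2})Y$ into $N_{F}(FX,FY)=0$ yields $N_{F}(lX,lY)=0$ in one line, with an \emph{explicit, globally defined} smooth preimage $Z=-(\alpha F^{K-1}+\beta F^{K-2})X$ of $lX$ under $F$. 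You instead prove abstractly that $F$ restricts to a fibrewise isomorphism of $D_{l}$, using \eqref{eq_N1}, \eqref{eq_N2} and the projector property via \eqref{eq_O}, and then invert; that argument is sound (your injectivity and invariance steps check out, and since the rank of $F$ is assumed constant, $D_{l}$ is a smooth subbundle, so $(F|_{D_{l}})^{-1}$ is in fact globally smooth --- your hedge about local definition, and the appeal to tensoriality of $N_{F}$, are both correct but unnecessary here). What your approach buys is generality and a cleaner geometric statement: both conditions are identified with the vanishing of $N_{F}$ on $D_{l}\times D_{l}$, using nothing about $F$ beyond $\mathrm{Im}\,l=\mathrm{Im}\,F$, $\ker l=\ker F$ and $l^{2}=l$, so it would survive for any structure with a compatible projector. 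What the paper's substitution buys is brevity and constructiveness: the defining polynomial identity \eqref{eq_A} hands you the inverse for free, avoiding rank-constancy, subbundle smoothness, and inversion altogether; indeed your map $(F|_{D_{l}})^{-1}\circ l$ is exactly the paper's operator $-(\alpha F^{K-1}+\beta F^{K-2})$ composed with nothing more than the substitution. Your easy direction (substituting $FX,FY$ and using $lF=F$ from \eqref{eq_G}) coincides with the paper's part $(ii)$.
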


\begin{proof}
$(i)$ Assume that $N_{F}(FX, FY)=0$, we replace $X$ with $-(\alpha F^{K-1}+\beta F^{K-2})X$ and $Y$ with $-(\alpha F^{K-1}+\beta F^{K-2})Y$, respectively, we obtain $N_{F}(lX, lY)=0$.
\item $(ii)$ Conversely, assume that $N_{F}(lX, lY)=0$, we replace $X, Y$ with $FX$, $FY$, respectively, we obtain $N_{F}(FX, FY)=0$.
\end{proof}

\begin{proposition}\label{pro_02} Given an $F$-structure manifold $M$. If the $F$-structure  is 
integrable, then we have
\begin{eqnarray*}
(i)\; F[X, Y]&=& (\alpha F^{K-1}+ \beta F^{K-2})[FX, FY]+l\big([F X, Y] + [X, FY]\big),\\
(ii)\; [FX, FY]&=&l [FX, FY],\\
(iii)\; m[FX, FY]&=&0,
\end{eqnarray*}
for any vector fields $X$ and $Y$  on $M$.
\end{proposition}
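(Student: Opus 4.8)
The plan is to start from the integrability hypothesis and feed it into all three parts. Since the $F$-structure is integrable, its Nijenhuis tensor vanishes, so from \eqref{Nijenhuis} we have, for all vector fields $X, Y$,
\begin{equation*}
[FX, FY] = F\big([FX, Y] + [X, FY]\big) - F^{2}[X, Y].
\end{equation*}
I would use this single rewritten relation throughout, combined only with the algebraic identities of Lemma \ref{lem_1} and the defining equation \eqref{eq_A}.

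For $(i)$, I would substitute this expression for $[FX, FY]$ into the right-hand side of the claimed identity. Multiplying $(\alpha F^{K-1}+ \beta F^{K-2})$ through $F$ and through $F^{2}$ produces the combinations $\alpha F^{K}+ \beta F^{K-1}$ and $\alpha F^{K+1}+ \beta F^{K}$; by the definition \eqref{eq_B} of $l$ the first equals $-l$, and by the defining equation \eqref{eq_A} the second equals $-F$. The two $l$-terms then cancel against the explicit $l\big([FX,Y]+[X,FY]\big)$ contribution, and the remaining $F^{2}[X,Y]$ term collapses to $F[X,Y]$, which is exactly the claim.

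For $(ii)$, the key observation is that the rewritten Nijenhuis relation already exhibits $[FX, FY]$ as an element of $\mathrm{Im}\,F$, namely $[FX, FY] = F\big([FX,Y] + [X,FY] - F[X,Y]\big)$. Because $lF = F$ by \eqref{eq_G}, the operator $l$ restricts to the identity on $\mathrm{Im}\,F$ (which coincides with $\mathrm{Im}\,l$ by \eqref{eq_N1}); applying $l$ to this representation and using $lF = F$ therefore yields $l[FX, FY] = [FX, FY]$.

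Part $(iii)$ is then immediate: from $l + m = I$ in \eqref{eq_D} together with $(ii)$ one gets $m[FX, FY] = [FX, FY] - l[FX, FY] = 0$, and alternatively one may apply $mF = 0$ from \eqref{eq_H} directly to the $\mathrm{Im}\,F$ representation of $[FX, FY]$. I do not expect any genuine obstacle here; every step reduces to bookkeeping with the polynomial relation \eqref{eq_A}. The only point requiring a little care is tracking the powers of $F$ in $(i)$, so that the combinations $(\alpha F^{K-1}+ \beta F^{K-2})F$ and $(\alpha F^{K-1}+ \beta F^{K-2})F^{2}$ are correctly identified as $-l$ and $-F$ respectively.
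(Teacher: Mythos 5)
Your proposal is correct and takes essentially the same route as the paper: both arguments rest on the vanishing-Nijenhuis identity combined with $(\alpha F^{K-1}+\beta F^{K-2})F=\alpha F^{K}+\beta F^{K-1}=-l$, $(\alpha F^{K-1}+\beta F^{K-2})F^{2}=\alpha F^{K+1}+\beta F^{K}=-F$, and $lF=F$, with only organizational differences (you verify $(i)$ by substitution into the right-hand side where the paper derives it forward by operating with $-(\alpha F^{K-1}+\beta F^{K-2})$, and for $(ii)$ you read $[FX,FY]\in\mathrm{Im}\,F$ off the Nijenhuis relation instead of first establishing the paper's auxiliary identity $N_{F}(X,Y)-lN_{F}(X,Y)=[FX,FY]-l[FX,FY]$). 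The mathematical content is identical throughout, including part $(iii)$.
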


\begin{proof}
$(i)$\; Since $N_{F}(X,Y)=0$ we obtain
\begin{eqnarray*}
[FX, FY]+F^{2}[X, Y]=F\big([FX, Y]+[X, FY]\big)
\end{eqnarray*}
Operating on it by $-(\alpha F^{K-1}+ \beta F^{K-2})$ we get	
\begin{eqnarray*}
-(\alpha F^{K-1}+ \beta F^{K-2})\big([FX, FY] + F^{2}[X, Y]\big)=\qquad\qquad\qquad\qquad\qquad\qquad\\-(\alpha F^{K}+ \beta F^{K-1})\big([FX, Y]+ [X, F Y]\big).
\end{eqnarray*}
Using \eqref{eq_B} and \eqref{eq_G}, we find
\begin{eqnarray*}
-(\alpha F^{K-1}+ \beta F^{K-2})[FX, FY] + F[X, Y]=l\big([FX, Y]+ [X, F Y]\big).
\end{eqnarray*}
$(ii)$\; From \eqref{eq_G}, we find
\begin{eqnarray}\label{eq_1.1}
N_{F}(X, Y)-lN_{F}(X, Y) = [FX, FY] - l[FX, FY],
\end{eqnarray}
since $N_{F}(X,Y)=0$ we obtain, $[FX, FY]=l[FX, FY]$.\\
$(iii)$\; Using \eqref{eq_D}, we find $m[FX, FY]=0$.
\end{proof}

Let $N_{l}$  and $N_{m}$  denote the Nijenhuis tensors corresponding to the operators $l$ and 
$m$ respectively, then
\begin{eqnarray*}
N_{l}(X, Y) &=& [lX, lY] - l[lX, Y] -l[X, lY] + l[X, Y],\\
N_{m}(X, Y)	&=& [mX, mY] - m[mX, Y] -m[X, mY] + m[X, Y],
\end{eqnarray*}
for any vector fields $X$ and $Y$  on $M$.

\begin{proposition}\label{pro_1}	
Given an $F$-structure manifold $M$, then  we have
\begin{eqnarray}\label{eq_R}
N_{l}(X, Y) = N_{m}(X, Y) =m[lX, lY] +l[mX, mY],
\end{eqnarray}
for any vector fields $X$ and $Y$ on $M$.
\end{proposition}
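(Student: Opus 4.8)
The plan is to reduce everything to the single projection $l$, using only the relation $m = I - l$ from \eqref{eq_D} together with the idempotency $l^{2} = l$ from \eqref{eq_E}; no deeper structural facts about $F$ enter. As a first step I would write out the definition of $N_{l}$ and simplify its last term via $l^{2}=l$, giving
\begin{eqnarray*}
N_{l}(X, Y) = [lX, lY] - l[lX, Y] - l[X, lY] + l[X, Y].
\end{eqnarray*}
Everything else is then a matter of expanding brackets by bilinearity and applying $m=I-l$.

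For the first equality $N_{l} = N_{m}$, I would substitute $m = I - l$ throughout the definition of $N_{m}$ and expand. The inner bracket splits as $[mX, mY] = [X, Y] - [lX, Y] - [X, lY] + [lX, lY]$, while each outer application of $m$ splits as $m[\,\cdot\,,\,\cdot\,] = [\,\cdot\,,\,\cdot\,] - l[\,\cdot\,,\,\cdot\,]$. Collecting like terms, all contributions that do not carry an outer $l$ cancel except $[lX, lY]$, and the surviving $l$-prefixed terms reassemble exactly into the displayed expression for $N_{l}$. The only identity used beyond bilinearity of the Lie bracket is $l^{2}=l$.

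For the second equality I would compute the two summands on the right-hand side of \eqref{eq_R} directly. Using $m = I - l$ once more, one gets $m[lX, lY] = [lX, lY] - l[lX, lY]$, and expanding $[mX, mY]$ and applying $l$ yields $l[mX, mY] = l[X, Y] - l[X, lY] - l[lX, Y] + l[lX, lY]$. Adding these, the terms $\mp\, l[lX, lY]$ cancel and what remains is precisely $N_{l}(X, Y)$, completing the chain of equalities.

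The hard part is not conceptual but organizational: the bilinear expansions produce several bracket monomials, namely $[X,Y]$, $[lX,Y]$, $[X,lY]$, and $[lX,lY]$, each possibly prefixed by an outer $l$, and the entire argument hinges on tracking their signs so that the non-$l$ terms cancel cleanly while the $l$-prefixed terms recombine into $N_{l}$. To keep this under control I would record, for each distinct monomial, its total coefficient across all four terms of $N_{m}$ (respectively across the two summands on the right), which makes the cancellations transparent and guards against sign slips.
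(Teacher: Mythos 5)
Your proposal is correct and takes essentially the same route as the paper: both arguments are pure bilinear bracket expansions using only $l+m=I$ and $l^{2}=l$, the paper splitting arguments as $X=lX+mX$, $Y=lY+mY$ inside the definitions of $N_{l}$ and $N_{m}$ where you instead eliminate $m=I-l$ throughout --- the same bookkeeping run in the opposite direction. All three of your claimed cancellations check out, so nothing further is needed.
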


\begin{proof}
Using (\ref{eq_D}), we have, $lX+mX=X$, then
\begin{eqnarray*}
N_{l}(X, Y) &=& [lX, lY] - l[l X, lY+mY] -l[lX+mX, lY]\\
&& + l[lX+mX, lY+mY],\\
&=& [lX, lY] - l[lX, lY] -l[lX, mY] - l[lX, lY] -l[mX, lY]\\
&& +l[lX, lY]+l[lX, mY]+l[mX, lY]+l[mX, mY]\\
&=& [lX, lY]- l[lX, lY]+l[mX, mY]\\
&=& m[lX, lY] +l[mX, mY].
\end{eqnarray*}
\begin{eqnarray*}
N_{m}(X, Y) &=& [mX, mY] - m[m X, lY+mY] -m[lX+mX, mY]\\
&& + m[lX+mX, lY+mY],\\
&=& [mX, mY] - m[mX, lY] -m[mX, mY] - m[lX, mY] -m[mX, mY]\\
&& +m[lX, lY]+m[lX, mY]+m[mX, lY]+m[mX, mY]\\
&=& [mX, mY]- m[mX, mY]+m[lX, lY]\\
&=& m[lX, lY] +l[mX, mY].
\end{eqnarray*} 
\end{proof}

By virtue of Proposition \ref{pro_1}, we get the following proposition. 

\begin{proposition}\label{pro_1.1}
Given an $F$-structure manifold $M$, the both operators $l$ and $m$ are integrable if and only if 	
\begin{eqnarray*}
N_{l}(X, Y)=0, 
\end{eqnarray*}
or
\begin{eqnarray*}
m[lX, lY] =-l[mX, mY], 
\end{eqnarray*}
for any vector fields $X$ and $Y$ on $M$.
\end{proposition}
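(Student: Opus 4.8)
The plan is to convert integrability of the two complementary distributions $D_{l}=\mathrm{Im}\,l$ and $D_{m}=\mathrm{Im}\,m$ into bracket identities, and then to read off the result from Proposition~\ref{pro_1} by exploiting the direct-sum splitting~\eqref{eq_O}.

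First I would record the bracket characterizations of involutivity. By the Frobenius theorem quoted in the introduction, $D_{l}$ is integrable iff it is involutive, i.e.\ $[U,V]\in D_{l}$ for all $U,V\in D_{l}$. Since $l$ is idempotent by~\eqref{eq_E}, every $U\in\mathrm{Im}\,l$ satisfies $lU=U$, so the vector fields in $D_{l}$ are exactly those of the form $lX$; and by~\eqref{eq_L} a field lies in $D_{l}=\ker m$ iff $m$ annihilates it. Hence involutivity of $D_{l}$ is equivalent to $m[lX,lY]=0$ for all $X,Y$. Symmetrically, using~\eqref{eq_M}, integrability of $D_{m}$ is equivalent to $l[mX,mY]=0$ for all $X,Y$.

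Next I would invoke Proposition~\ref{pro_1}, which supplies $N_{l}(X,Y)=m[lX,lY]+l[mX,mY]$. The forward implication is then immediate: if both distributions are integrable, both summands vanish, so $N_{l}(X,Y)=0$; and the equivalence with the second displayed condition is merely the rearrangement $N_{l}(X,Y)=0\Leftrightarrow m[lX,lY]=-l[mX,mY]$.

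The hard part --- really the only substantive point --- is the converse, namely deducing integrability of \emph{both} distributions from the single equation $N_{l}(X,Y)=0$. Here I would argue that the two summands lie in complementary subbundles: $m[lX,lY]\in\mathrm{Im}\,m$ whereas $l[mX,mY]\in\mathrm{Im}\,l$. By the decomposition $T_{x}M=\mathrm{Im}\,l_{x}\oplus\mathrm{Im}\,m_{x}$ of~\eqref{eq_O}, the expression of a tangent vector as a sum of a component in $\mathrm{Im}\,l$ and one in $\mathrm{Im}\,m$ is unique; since their sum vanishes, each component must vanish separately. Therefore $m[lX,lY]=0$ and $l[mX,mY]=0$ for all $X,Y$, which by the first step means $D_{l}$ and $D_{m}$ are both involutive, hence integrable by Frobenius. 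This uniqueness-of-decomposition step is the pivot of the whole argument; the rest is formal.
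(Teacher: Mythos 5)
Your proof is correct, but it follows a genuinely different route from the paper's. The paper gives no standalone argument for Proposition~\ref{pro_1.1}: it states that the result follows ``by virtue of Proposition~\ref{pro_1}'', reading integrability of the operators $l$ and $m$ as the vanishing of their Nijenhuis tensors, so that $N_{l}=N_{m}=m[lX,lY]+l[mX,mY]$ makes the statement immediate. The distribution-level equivalence you actually prove is the paper's Theorem~\ref{th_2}, and there the converse is handled not by your direct-sum argument but by a substitution trick: starting from $N_{l}(lX,lY)+N_{l}(mX,mY)=0$ (via \eqref{eq_W}), one replaces $X,Y$ by $lX,lY$ (resp.\ $mX,mY$) and uses $l^{2}=l$, $m^{2}=m$, $lm=ml=0$ to annihilate one summand at a time. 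Your alternative --- noting $m[lX,lY]\in\mathrm{Im}\,m$ and $l[mX,mY]\in\mathrm{Im}\,l$ and invoking uniqueness of the splitting \eqref{eq_O} --- is equally valid and arguably more conceptual; it also yields a small bonus the paper leaves implicit: the identity $m[lX,lY]=-l[mX,mY]$ forces both sides to vanish separately, since the two sides lie in complementary subbundles with trivial intersection, so the second displayed condition is exactly simultaneous involutivity of $D_{l}$ and $D_{m}$ and not something weaker. The trade-off: the paper's substitution argument is purely formal and needs only the projector identities of Lemma~\ref{lem_1}, whereas yours leans on the pointwise decomposition of Proposition~\ref{pro_0}; both are short, and yours makes the geometric mechanism more transparent.
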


\begin{proposition}\label{pro_2}	
Given an $F$-structure manifold $M$, the following identities hold
\begin{eqnarray}
\label{eq_S} N_{l}(lX, lY)&=& m[lX, lY],\\
\label{eq_T} N_{l}(mX, mY)&=& l[mX, mY],\\\
\label{eq_W} N_{l}(X, Y)&=&N_{l}(lX, lY) +N_{l}(mX, mY),\\
\label{eq_W1} mN_{F}(X, Y)&=&N_{l}(FX, FY),\\
N_{l}(lX, mY)&=&0,\nonumber\\
N_{l}(mX, lY)&=&0,\nonumber
\end{eqnarray}
for any vector fields $X$ and $Y$ on $M$.
\end{proposition}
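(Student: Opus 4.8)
The plan is to use the master formula \eqref{eq_R} from Proposition \ref{pro_1}, namely $N_{l}(X, Y) = m[lX, lY] + l[mX, mY]$, as the single engine driving every identity; the only additional inputs are the projection relations from Lemma \ref{lem_1} and, for \eqref{eq_W1}, the Nijenhuis identity \eqref{eq_02}. Each claim reduces to substituting projected arguments into \eqref{eq_R} and then collapsing terms via the idempotence and orthogonality relations $l^{2}=l$, $m^{2}=m$, $lm=ml=0$.

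First I would establish \eqref{eq_S} and \eqref{eq_T}. For \eqref{eq_S}, replacing $X, Y$ by $lX, lY$ in \eqref{eq_R} and using $l^{2}=l$ together with $ml=0$ collapses the second summand to zero and turns the first into $m[lX, lY]$. For \eqref{eq_T}, replacing $X, Y$ by $mX, mY$ and using $lm=0$ and $m^{2}=m$ kills the first summand and leaves $l[mX, mY]$. The decomposition \eqref{eq_W} is then immediate: adding \eqref{eq_S} and \eqref{eq_T} reproduces exactly the right-hand side of \eqref{eq_R}, which is $N_{l}(X,Y)$.

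Next I would prove the two vanishing statements $N_{l}(lX, mY)=0$ and $N_{l}(mX, lY)=0$ in the same manner. Feeding the mixed pair $lX, mY$ into \eqref{eq_R} and using $l(mY)=0$ together with $m(lX)=0$ annihilates both summands simultaneously, and the symmetric computation handles $N_{l}(mX, lY)$. Finally, for \eqref{eq_W1} I would evaluate $N_{l}(FX, FY)$ from \eqref{eq_R} using $lF=F$ and $mF=0$ from \eqref{eq_G}–\eqref{eq_H}: the second summand drops out and the first becomes $m[FX, FY]$, which by \eqref{eq_02} equals $mN_{F}(X, Y)$.

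I do not expect a genuine obstacle: every identity is a one-line substitution into \eqref{eq_R} followed by an application of the projector relations (and $lF=F$, $mF=0$ for the last). The only point requiring care is the bookkeeping — ensuring the correct projector ($l$ or $m$) is applied to the correct slot, and in particular that $l$ and $m$ are read as projections (so $l\circ l=l$, $m\circ l=0$, etc.) rather than being confused when the arguments are themselves already projected.
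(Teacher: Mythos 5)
Your proposal is correct and takes essentially the same approach as the paper: every identity is obtained by feeding projected arguments into the master formula \eqref{eq_R} and collapsing via $l^{2}=l$, $m^{2}=m$, $lm=ml=0$, with \eqref{eq_02} then yielding \eqref{eq_W1}. The only cosmetic difference is that for \eqref{eq_W1} the paper substitutes $FX,FY$ into the already-proved \eqref{eq_S} (using $lF=F$), whereas you substitute directly into \eqref{eq_R} using $lF=F$ and $mF=0$ --- the two computations coincide.
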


\begin{proof}
By virtue of \eqref{eq_E}, \eqref{eq_F}, \eqref{eq_I} and \eqref{eq_R} we get
\begin{eqnarray*}
(i) && N_{l}(lX, lY)= m[l^{2}X, l^{2}Y] +l[mlX, mlY]=m[lX, lY],\\
(ii) && N_{l}(mX, mY)=m[lmX, lmY] +l[m^{2}X, m^{2}Y]= l[mX, mY],\\
(iii) && N_{l}(lX, mY)=m[l^{2}X, lmY] +l[mlX, m^{2}Y]=0,\\
(iv) && N_{l}(mX, lY)=m[lmX, l^{2}Y] +l[m^{2}X, mlY]=0.
\end{eqnarray*}
$(v)$ By virtue of \eqref{eq_R}, \eqref{eq_S} and \eqref{eq_T} we get \eqref{eq_W}.\\
$(vi)$ In \eqref{eq_S}, replacing $X, Y$  with  
$FX, FY$, respectively, we find 
$$N_{l}(FX, FY)= m[FX, FY].$$ 
By (\ref{eq_02}), we obtain \eqref{eq_W1}.
\end{proof}

\begin{proposition}\label{pro_6}	
Given an $F$-structure manifold $M$, the following identity hold
\begin{eqnarray*}
N_{F}(mX, mY)=F^{2}N_{l}(mX, mY),
\end{eqnarray*}
for any vector fields $X$ and $Y$  on $M$.
\end{proposition}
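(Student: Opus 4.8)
The plan is to reduce the claimed identity to the two computations already recorded in the excerpt, namely \eqref{eq_00} and \eqref{eq_T}, and then to close the gap with a one-line operator identity coming from \eqref{eq_G}. Concretely, \eqref{eq_00} gives the left-hand side directly as
\begin{eqnarray*}
N_{F}(mX, mY) = F^{2}[mX, mY],
\end{eqnarray*}
while \eqref{eq_T} evaluates the Nijenhuis tensor of $l$ on $m$-fields as $N_{l}(mX, mY) = l[mX, mY]$. Hence the target statement $N_{F}(mX, mY) = F^{2}N_{l}(mX, mY)$ is equivalent to the purely algebraic equality $F^{2}[mX, mY] = F^{2}\,l[mX, mY]$, and so it suffices to show that $F^{2}$ and $F^{2}l$ agree as operators.

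The key observation is that $F^{2}l = F^{2}$. This follows immediately from \eqref{eq_G}: since $Fl = F$, we get $F^{2}l = F(Fl) = F\cdot F = F^{2}$. (Equivalently one can argue through $l = I - m$ together with $Fm = 0$ from \eqref{eq_H}, but invoking \eqref{eq_G} directly is cleaner.) Applying this operator identity to the vector field $[mX, mY]$ yields $F^{2}l[mX, mY] = F^{2}[mX, mY]$.

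First I would state the two substitutions from \eqref{eq_00} and \eqref{eq_T}, then chain them through $F^{2}l = F^{2}$ to obtain
\begin{eqnarray*}
F^{2}N_{l}(mX, mY) = F^{2}\,l[mX, mY] = F^{2}[mX, mY] = N_{F}(mX, mY),
\end{eqnarray*}
which is the desired equality. I do not anticipate a genuine obstacle here: the entire content is the combination of two previously established formulas with the elementary consequence $F^{2}l = F^{2}$ of the projection relation $Fl = F$. The only point requiring a moment of care is recognizing that \eqref{eq_T} supplies $N_{l}$ on $m$-arguments in the exact form needed, so that no further manipulation of the Nijenhuis tensors is necessary beyond multiplying by $F^{2}$.
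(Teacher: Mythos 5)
Your proposal is correct and follows essentially the same route as the paper: both combine \eqref{eq_00} and \eqref{eq_T} and close the gap with $F^{2}l=F^{2}$. In fact you are slightly more careful than the paper, which silently uses $F^{2}l[mX,mY]=F^{2}[mX,mY]$ without citing $Fl=F$ from \eqref{eq_G} as you do.
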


\begin{proof}
By virtue of \eqref{eq_T}, we have $N_{l}(mX, mY)= l[mX, mY]$, operating on it by $F^{2}$, 
we find $F^{2}N_{l}(mX, mY)= F^{2}[mX, mY]$.\\
On the other hand by \eqref{eq_00}, we have  $N_{F}(mX, mY) = F^{2}[mX, mY]$.\\ 
Hence $N_{F}(mX, mY)=F^{2}N_{l}(mX, mY)$.
\end{proof}

\section{Cauchy-Riemann structure}\label{Sec4}
Let $T^{\mathbb{C}}M$ denotes the complexified tangent bundle of differentiable manifold $M$ defined by
$$T^{\mathbb{C}}M = \left\lbrace X + j Y :\; X,Y \in TM \right\rbrace =TM\otimes_{\mathbb{R}}\mathbb{C},$$
where $j$ is the imaginary unit. 

A $CR$-structure on $M$ is a complex subbundle $H$ of $T^{\mathbb{C}}M$ such that $H \cap \overline{H}=\{0\}$ and $H$ is involutive, where $\overline{H}$ denotes the complex conjugate of $H$. In this case, we say $M$ is a $CR$-manifold.

Let $F$-structure on $M$ of rank $r = 2k$ satisfying the equation \eqref{eq_A}. We define complex subbundle $H$ of $T^{\mathbb{C}}M$ by
\begin{eqnarray}\label{eq_3.1}
H=\left\lbrace X-j\widehat{F}X,\; X\in D_{l}\right\rbrace.
\end{eqnarray}
where $\widehat{F}=(\alpha F^{K}+ \beta F^{K-1})^{1/2}$. Then, we have 
\begin{eqnarray}\label{eq_3.2}
Real(H)=D_{l}\;\; \textit{and}\;\; H \cap \overline{H}=\{0\}.	
\end{eqnarray} 
Indeed
\begin{eqnarray*}
Z\in H \cap \overline{H}&\Rightarrow& Z=X-j\widehat{F}X=X+j\widehat{F}X,\; X\in D_{l}\\
&\Rightarrow& \widehat{F}X=0\\
&\Rightarrow& Z=X\in \ker \widehat{F},
\end{eqnarray*}
from, \eqref{eq_M}, \eqref{eq_O} and \eqref{eq_O1}, we have $Z\in \ker \widehat{F}=\ker l=D_{m}$\\
$Z\in  D_{l}\cap D_{m}=\{0\}$.

\begin{lemma}\label{lem_4}	
Given an $F$-structure manifold $M$, the following identity hold
\begin{eqnarray}\label{eq_3.3}
[P,Q]=[X,Y]-[\widehat{F}X,\widehat{F}Y]-j([\widehat{F}X,Y]+[X,\widehat{F}Y]),
\end{eqnarray}
for any $P=X-j\widehat{F}X, Q=Y-j\widehat{F}Y \in H$, where $X,Y \in D_{l}$.
\end{lemma}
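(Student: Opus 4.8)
The plan is to compute the Lie bracket $[P,Q]$ directly by expanding it using the $\mathbb{C}$-bilinearity of the bracket on $T^{\mathbb{C}}M$, treating the imaginary unit $j$ as a scalar that commutes with everything and satisfies $j^2=-1$. This is a purely formal computation: since $P=X-j\widehat{F}X$ and $Q=Y-j\widehat{F}Y$ with $X,Y\in D_l$ real vector fields and $\widehat{F}X,\widehat{F}Y$ again real vector fields, I would write
\begin{eqnarray*}
[P,Q]=[X-j\widehat{F}X,\;Y-j\widehat{F}Y],
\end{eqnarray*}
and then distribute the bracket over the four resulting terms.

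First I would expand into the four brackets $[X,Y]$, $-j[X,\widehat{F}Y]$, $-j[\widehat{F}X,Y]$, and $(-j)(-j)[\widehat{F}X,\widehat{F}Y]$. The key step is recognizing that the coefficient of the last term is $(-j)^2=j^2=-1$, so it contributes $-[\widehat{F}X,\widehat{F}Y]$. Collecting the terms with real coefficients gives $[X,Y]-[\widehat{F}X,\widehat{F}Y]$, and collecting the two terms carrying a single factor of $-j$ gives $-j\bigl([\widehat{F}X,Y]+[X,\widehat{F}Y]\bigr)$. Assembling these yields exactly \eqref{eq_3.3}.

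The only subtlety worth flagging is that this computation legitimately uses the extension of the Lie bracket to the complexified tangent bundle $T^{\mathbb{C}}M=TM\otimes_{\mathbb{R}}\mathbb{C}$ as a $\mathbb{C}$-bilinear operation, so that $j$ may be pulled out of either slot freely. Given that extension, there is no genuine obstacle here: the identity is a direct consequence of bilinearity together with $j^2=-1$, and no properties of $\widehat{F}$ beyond its acting as a real operator on the real vector fields $X,Y\in D_l$ are needed. I would therefore present the proof as the single expansion above, with a remark that the $\mathbb{C}$-bilinearity of the bracket is what makes each step valid.
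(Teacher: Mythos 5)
Your proposal is correct and is essentially identical to the paper's own proof: both expand $[P,Q]$ by the $\mathbb{C}$-bilinearity of the bracket on $T^{\mathbb{C}}M$ into four terms, use $(-j)^{2}=-1$ on the term $[\widehat{F}X,\widehat{F}Y]$, and collect real and imaginary parts. Your explicit remark that the complexified bracket is $\mathbb{C}$-bilinear is a welcome clarification that the paper leaves implicit, but it does not change the argument.
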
 

\begin{proof}
\begin{eqnarray*}
[P,Q]&=&[X-j\widehat{F}X,Y-j\widehat{F}Y]\\
&=&[X,Y]+[X,-j\widehat{F}Y]+[-j\widehat{F}X,Y]+[-j\widehat{F}X,-j\widehat{F}Y])\qquad\qquad\qquad\qquad\qquad\qquad\qquad\\
&=&[X,Y]-[\widehat{F}X,\widehat{F}Y]-j([\widehat{F}X,Y]+[X,\widehat{F}Y]).
\end{eqnarray*}	
\end{proof}	

\begin{lemma}\label{lem_4}	
Given an $F$-structure manifold $M$, the following identity hold
\begin{eqnarray}\label{eq_3.4}
l([\widehat{F}X, Y] + [X, \widehat{F}Y]) = [\widehat{F}X, Y] + [X, \widehat{F}Y],\\\label{eq_3.5}	
l[\widehat{F}X, \widehat{F}Y]=[\widehat{F}X, \widehat{F}Y],\qquad\qquad\qquad\qquad
\end{eqnarray}
for any $X,Y \in D_{l}$.
\end{lemma}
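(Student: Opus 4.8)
The plan is to rewrite both identities in terms of the projection $m=I-l$ and to isolate \eqref{eq_3.5} as the only substantive claim. Since $\mathrm{Im}\,l=\ker m$, the identity \eqref{eq_3.4} is equivalent to $m\big([\widehat{F}X, Y] + [X, \widehat{F}Y]\big)=0$ and \eqref{eq_3.5} to $m[\widehat{F}X,\widehat{F}Y]=0$; each simply asserts that the bracket concerned lies in $D_l$. For $X,Y\in D_l$ one has $lX=X$, $lY=Y$, and by \eqref{eq_J} also $\widehat{F}X=l\widehat{F}X$ and $\widehat{F}Y=l\widehat{F}Y$, so all four fields $X,Y,\widehat{F}X,\widehat{F}Y$ belong to $D_l$.

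First I would deduce \eqref{eq_3.4} from \eqref{eq_3.5}. The relation $\widehat{F}^{2}=-l$ established after Proposition \ref{pro_0} makes $\widehat{F}$ a bijection of $D_l$ with inverse $-\widehat{F}$, since $\widehat{F}(-\widehat{F}Y)=-\widehat{F}^{2}Y=lY=Y$ for $Y\in D_l$. Substituting the $D_l$-pairs $(X,-\widehat{F}Y)$ and $(-\widehat{F}X, Y)$ into \eqref{eq_3.5} then yields $m[\widehat{F}X,Y]=0$ and $m[X,\widehat{F}Y]=0$ respectively, and summing gives \eqref{eq_3.4}. This reduction is purely algebraic and needs no further hypothesis.

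The substance is thus \eqref{eq_3.5}, that is, $m[\widehat{F}X,\widehat{F}Y]=0$. Because $\widehat{F}$ maps $D_l$ onto $D_l$, this is precisely the involutivity of $D_l=\mathrm{Im}\,l=\mathrm{Im}\,F$ (using \eqref{eq_N1}). To prove it I would pass to $F$: as $\widehat{F}X,\widehat{F}Y\in\mathrm{Im}\,F$ one may write $\widehat{F}X=FX_0$ and $\widehat{F}Y=FY_0$ for locally smooth $X_0,Y_0$, so that $m[\widehat{F}X,\widehat{F}Y]=m[FX_0,FY_0]$; equivalently, \eqref{eq_S} gives $m[\widehat{F}X,\widehat{F}Y]=N_l(\widehat{F}X,\widehat{F}Y)=N_l(FX_0,FY_0)$, which by \eqref{eq_W1} equals $mN_F(X_0,Y_0)$.

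The main obstacle is exactly this last vanishing: $m[FX_0,FY_0]=0$, i.e. the involutivity of $\mathrm{Im}\,F$, cannot follow from the pointwise identity \eqref{eq_A} alone, which constrains no Lie bracket. It must come from the integrability of the $F$-structure, which is precisely what Proposition \ref{pro_02}$(iii)$ supplies as $m[FX_0,FY_0]=0$ (equivalently, $N_F=0$ forces $mN_F(X_0,Y_0)=0$ in the computation above). I would therefore run the argument under the integrability assumption that underlies Section \ref{Sec4}; apart from invoking this, the only care needed is the local smooth choice of the preimages $X_0,Y_0$, available because $F$ has constant rank on an $F$-structure manifold.
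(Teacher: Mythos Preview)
Your diagnosis is correct, and in fact sharper than the paper's own argument. The paper ``proves'' \eqref{eq_3.4} by expanding $[\widehat{F}X,Y]=\widehat{F}X.Y-Y.\widehat{F}X$ and then sliding $l$ through each summand to get $l\widehat{F}X.Y-lY.\widehat{F}X$, invoking $l\widehat{F}=\widehat{F}$ and $lY=Y$. But $l$ is a $(1,1)$-tensor field, not a scalar: the passage $l(A.B)=(lA).B$ is unjustified for vector fields, whatever the dot is meant to denote. The argument silently assumes exactly the compatibility between $l$ and the bracket that the Lemma purports to establish.

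Your reduction is sound: \eqref{eq_3.4} follows from \eqref{eq_3.5} by the substitutions $(X,-\widehat{F}Y)$ and $(-\widehat{F}X,Y)$, and \eqref{eq_3.5} is, via the bijectivity of $\widehat{F}$ on $D_l$, precisely the involutivity of $D_l$. Since the paper itself devotes Section~\ref{Sec5} to characterising when $D_l$ is integrable, the Lemma as stated (with no integrability hypothesis) cannot hold in general; your observation that the pointwise relation \eqref{eq_A} constrains no Lie bracket is the right explanation. The fix you propose works, and can be sharpened slightly: the Lemma is only used inside Theorem~\ref{th_0.0}, whose hypothesis is that $\widehat{F}$ is integrable, and from $N_{\widehat{F}}(X,Y)=0$ one gets directly $m[\widehat{F}X,\widehat{F}Y]=mN_{\widehat{F}}(X,Y)=0$ for $X,Y\in D_l$ (using $m\widehat{F}=0$ and $ml=0$), which is \eqref{eq_3.5}. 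So the missing hypothesis is ``$\widehat{F}$ integrable'', weaker than the ``$F$ integrable'' you invoke through Proposition~\ref{pro_02}; with it, both identities follow along the lines you sketch.
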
 

\begin{proof}  
\begin{eqnarray*}
l([\widehat{F}X, Y] + [X, \widehat{F}Y])&=& l\big(\widehat{F}X.Y - Y.\widehat{F}X + X.\widehat{F}Y - \widehat{F}Y.X\big)\\
&=& l\widehat{F}X.Y - lY.\widehat{F}X + lX.\widehat{F}Y - l\widehat{F}Y.X
\end{eqnarray*}
as $X, Y\in D_{l}$, we have $lX=X, lY=Y$ and using \eqref{eq_J}, we get	
\begin{eqnarray*}
l([\widehat{F}X, Y] + [X, \widehat{F}Y])&=& \widehat{F}X.Y - Y.\widehat{F}X + X.\widehat{F}Y - \widehat{F}Y.X\\
&=& [\widehat{F}X, Y] + [X, \widehat{F}Y].
\end{eqnarray*}	
The formula \eqref{eq_3.5} is obtained by a similar calculation.
\end{proof}	

\begin{theorem}\label{th_0.0} Given an $F$-structure manifold $M$. If $\widehat{F}$ is integrable, then the complex subbundle $H$ defined by \eqref{eq_3.1} is a $CR$-structure on $M$.
\end{theorem}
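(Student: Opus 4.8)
The plan is to verify the two defining properties of a $CR$-structure for the subbundle $H$ given by \eqref{eq_3.1}: the transversality condition $H\cap\overline{H}=\{0\}$ and the involutivity of $H$. The first condition has already been established in \eqref{eq_3.2}, so the entire burden of the proof falls on showing that $H$ is involutive, i.e. that for any $P=X-j\widehat{F}X$ and $Q=Y-j\widehat{F}Y$ in $H$ (with $X,Y\in D_l$), the bracket $[P,Q]$ again lies in $H$. This is where the hypothesis that $\widehat F$ is integrable must be used.

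First I would compute $[P,Q]$ using Lemma \ref{lem_4}, formula \eqref{eq_3.3}, which expresses the bracket as
\begin{eqnarray*}
[P,Q]=[X,Y]-[\widehat{F}X,\widehat{F}Y]-j\big([\widehat{F}X,Y]+[X,\widehat{F}Y]\big).
\end{eqnarray*}
To conclude that $[P,Q]\in H$, I must exhibit a vector field $W\in D_l$ such that $[P,Q]=W-j\widehat{F}W$. The natural candidate, reading off the real part, is $W=[X,Y]-[\widehat{F}X,\widehat{F}Y]$. The proof then reduces to two separate verifications: that $W\in D_l$, and that the imaginary part matches, namely $\widehat{F}W=[\widehat{F}X,Y]+[X,\widehat{F}Y]$.

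The integrability hypothesis enters precisely in matching the imaginary part. Writing out the vanishing Nijenhuis tensor $N_{\widehat F}(X,Y)=0$ gives the identity $[\widehat{F}X,\widehat{F}Y]+\widehat{F}^{2}[X,Y]=\widehat{F}\big([\widehat{F}X,Y]+[X,\widehat{F}Y]\big)$. Since $\widehat{F}$ acts on $D_l$ as an almost complex structure, we have $\widehat{F}^{2}=-l$ there (as noted after Proposition \ref{pro_0}), so $\widehat{F}^{2}[X,Y]$ should be replaced by $-[X,Y]$ on the relevant component, and I would apply $\widehat F$ to $W$ and use this Nijenhuis identity together with the projection formulas of Lemma \ref{lem_2} to obtain $\widehat{F}W=[\widehat{F}X,Y]+[X,\widehat{F}Y]$. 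For the claim $W\in D_l$, i.e. $lW=W$, I would invoke Lemma \ref{lem_4}, formula \eqref{eq_3.5}, which already gives $l[\widehat{F}X,\widehat{F}Y]=[\widehat{F}X,\widehat{F}Y]$, and handle the term $l[X,Y]$; the bracket $[X,Y]$ of sections of $D_l$ need not itself lie in $D_l$, so the key point is that its $D_m$-component is cancelled by the corresponding component of $[\widehat F X,\widehat F Y]$ via the Nijenhuis relation, leaving $W$ in $D_l$.

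The main obstacle I anticipate is the careful bookkeeping of projections needed to reconcile $\widehat{F}^{2}=-l$ (valid only on $D_l$) with the raw Nijenhuis identity, which is stated for arbitrary vector fields. One must be attentive to where $l$ and $m$ are applied, since $[X,Y]$ for $X,Y\in D_l$ can acquire a $D_m$-component. The cleanest route is probably to project the Nijenhuis identity by $m$ to control the $D_m$-component of $W$ (showing $mW=0$, hence $W\in D_l$) and separately to apply $\widehat F$ to verify the imaginary part, using Lemma \ref{lem_2} to kill any spurious $\widehat F$-images of $D_m$-terms. Once both the membership $W\in D_l$ and the imaginary-part identity are in hand, the conclusion $[P,Q]=W-j\widehat{F}W\in H$ follows immediately, and together with \eqref{eq_3.2} this establishes that $H$ is a $CR$-structure.
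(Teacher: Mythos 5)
Your proposal follows essentially the same route as the paper's proof: both reduce the problem to involutivity (transversality being \eqref{eq_3.2}), expand $[P,Q]$ via \eqref{eq_3.3}, feed in $N_{\widehat{F}}(X,Y)=0$, apply $-\widehat{F}$ together with the projection identities \eqref{eq_3.4}--\eqref{eq_3.5} and Lemma \ref{lem_2}, and rewrite $[P,Q]=W-j\widehat{F}W$ with $W=[X,Y]-[\widehat{F}X,\widehat{F}Y]$. If anything you are more careful than the paper, which silently replaces $\widehat{F}^{2}[X,Y]$ by $-[X,Y]$ (rather than $-l[X,Y]$) and never explicitly verifies that the real part $W$ lies in $D_{l}$ --- exactly the two bookkeeping points your plan flags and resolves.
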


\begin{proof} From \eqref{eq_3.2}, we have $Real(H)=D_{l}$  and $H \cap \overline{H}=\{0\}$. It remains to show that $H$ is involutive, let $P=X-j\widehat{F}X,\;Q=Y-j\widehat{F}Y \in H$, such that $X,Y \in D_{l}$. Using using \eqref{eq_3.3}, \eqref{eq_3.4} and \eqref{eq_3.5}, we get
\begin{eqnarray*}
[P,Q]&=&[X,Y]-[\widehat{F}X,\widehat{F}Y]-j([\widehat{F}X,Y]+[X,\widehat{F}Y])
\end{eqnarray*}
Since $\widehat{F}$ is integrable, then $N_{\widehat{F}}(X,Y)=0$ i.e.
\begin{eqnarray*}
[\widehat{F}X, \widehat{F}Y]-[X, Y]=\widehat{F}\big([\widehat{F}X, Y]+[X, \widehat{F}Y]\big),
\end{eqnarray*}	
operating on it by $-\widehat{F}$ we get
\begin{eqnarray*}
-\widehat{F}\big([\widehat{F}X, \widehat{F}Y]-[X, Y]\big)=l[\widehat{F}X, Y]+[X, \widehat{F}Y],
\end{eqnarray*}	
hence,
\begin{eqnarray*}
\widehat{F}\big([X, Y]-[\widehat{F}X, \widehat{F}Y]\big)=[\widehat{F}X, Y]+[X, \widehat{F}Y],
\end{eqnarray*}
from that we find,	 
\begin{eqnarray*}
[P,Q]&=&[X,Y]-[\widehat{F}X,\widehat{F}Y]-j\widehat{F}\big([X, Y]-[\widehat{F}X, \widehat{F}Y]\big)\in H.
\end{eqnarray*}	
\end{proof}

\section{Integrability conditions of distributions induced of $F$-structure}\label{Sec5}

\begin{theorem}\label{th_0} Given an $F$-structure manifold $M$. The distribution $D_{l}$ is  
integrable if and only if
\begin{eqnarray}\label{eq_X}
N_{l}(lX, lY)=0,
\end{eqnarray}
or
\begin{eqnarray*}
m[lX,  lY]=0,
\end{eqnarray*}
for any vector fields $X$ and $Y$ on $M$.
\end{theorem}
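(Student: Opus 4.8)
The plan is to reduce integrability of $D_{l}$ to involutivity via the Frobenius theorem recalled in the Introduction, and then to translate involutivity into the two equivalent algebraic conditions using the properties of the projection operators $l, m$ together with the Nijenhuis identity \eqref{eq_S} already established in Proposition \ref{pro_2}.

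First I would recall that $D_{l} = \mathrm{Im}\,l$, and that $l$ is a projection since $l^{2} = l$ by \eqref{eq_E}. Hence a vector field $Z$ belongs to $D_{l}$ if and only if $lZ = Z$, equivalently (using $l + m = I$ from \eqref{eq_D}) if and only if $mZ = 0$. In particular every section of $D_{l}$ has the form $lX$ for some vector field $X$ (take $X = Z$, since then $lX = lZ = Z$), while conversely $lX \in \mathrm{Im}\,l = D_{l}$ for every $X$. Therefore testing the bracket on the family $\{(lX, lY)\}$ exhausts all pairs of sections of $D_{l}$, and the involutivity of $D_{l}$ — that $[U, V] \in D_{l}$ whenever $U, V \in D_{l}$ — is equivalent to the requirement that $[lX, lY] \in D_{l}$, i.e. $m[lX, lY] = 0$, for all vector fields $X, Y$ on $M$.

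Next I would invoke \eqref{eq_S}, namely $N_{l}(lX, lY) = m[lX, lY]$, which gives at once the equivalence $N_{l}(lX, lY) = 0 \Leftrightarrow m[lX, lY] = 0$. Combined with the previous paragraph, each of the two displayed conditions in the statement is thus equivalent to the involutivity of $D_{l}$.

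Finally I would close the argument by applying the Frobenius theorem: since $D_{l}$ is integrable if and only if it is involutive, and involutivity has just been shown to be equivalent to $m[lX, lY] = 0$ (equivalently $N_{l}(lX, lY) = 0$), the theorem follows. I do not anticipate a genuine obstacle here; the only step requiring care is the reduction in the first paragraph, justifying that checking the bracket on all pairs $lX, lY$ captures every pair of sections of $D_{l}$, which rests precisely on $l$ being a projection onto $D_{l}$.
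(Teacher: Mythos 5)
Your proposal is correct and follows essentially the same route as the paper's proof: reduce integrability of $D_{l}$ to involutivity via Frobenius, characterize membership in $D_{l}$ by $m[lX,lY]=0$ (i.e.\ via \eqref{eq_L}), and pass to $N_{l}(lX,lY)=0$ through the identity \eqref{eq_S}. Your explicit justification that the pairs $(lX,lY)$ exhaust all pairs of sections of $D_{l}$ (since $l$ is a projection, $lZ=Z$ for $Z\in D_{l}$) fills in a step the paper leaves implicit, but it is the same argument.
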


\begin{proof} The  distribution $D_{l}$ is integrable if and only if for any vector fields 
$X$ and $Y$ on $M$ we have 
$$[lX,  lY] \in D_{l}.$$
By virtue of \eqref{eq_L} and \eqref{eq_S} we get,
\begin{eqnarray*}
[lX,  lY] \in D_{l}&\Leftrightarrow&  m[lX,  lY]=0 \Leftrightarrow N_{l}(lX, lY)=0.
\end{eqnarray*}
\end{proof}

\begin{theorem}\label{th_1} Given an $F$-structure manifold $M$. The distribution $D_{m}$ 
is integrable if and only if
\begin{eqnarray}\label{eq_Y}
N_{l}(mX, mY)=0,
\end{eqnarray}
or
\begin{eqnarray*}
l[mX,  mY]=0,
\end{eqnarray*}
for any vector fields $X$ and $Y$ on $M$.
\end{theorem}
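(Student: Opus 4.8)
The plan is to mirror exactly the structure of the proof of Theorem~\ref{th_0}, since the statement for $D_m$ is the dual of the statement for $D_l$ under the symmetric roles of $l$ and $m$ established in Proposition~\ref{pro_1}. First I would recall that the distribution $D_m=\mathrm{Im}\,m$ is integrable, by the Frobenius theorem quoted in the introduction, if and only if it is involutive, i.e.\ if and only if $[mX,mY]\in D_m$ for all vector fields $X,Y$ on $M$. Every element of $D_m$ has the form $mZ$, and since $m$ is a projection ($m^2=m$ by \eqref{eq_F}) these are exactly the vectors fixed by $m$; so the membership $[mX,mY]\in D_m$ must be translated into a pointwise algebraic condition.

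The key step is the translation $[mX,mY]\in D_m \Leftrightarrow l[mX,mY]=0$. This uses Proposition~\ref{pro_0}: by \eqref{eq_M} we have $\mathrm{Im}\,m_x=\ker l_x$, so a vector lies in $D_m$ precisely when it is annihilated by $l$. Applying this to the bracket $[mX,mY]$ gives the equivalence with $l[mX,mY]=0$. This is the analogue of the role that \eqref{eq_L}, $\mathrm{Im}\,l_x=\ker m_x$, played in the proof of Theorem~\ref{th_0}.

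Finally I would connect this to the Nijenhuis condition using \eqref{eq_T} from Proposition~\ref{pro_2}, which states $N_l(mX,mY)=l[mX,mY]$. Chaining the two equivalences yields
\begin{eqnarray*}
[mX,mY]\in D_m &\Leftrightarrow& l[mX,mY]=0 \;\Leftrightarrow\; N_l(mX,mY)=0,
\end{eqnarray*}
which is precisely the claimed criterion \eqref{eq_Y}. I do not anticipate a genuine obstacle here, as every ingredient is already available: the only point requiring a moment's care is invoking the correct half of Proposition~\ref{pro_0} (using \eqref{eq_M} rather than \eqref{eq_L}) and the correct half of Proposition~\ref{pro_2} (using \eqref{eq_T} rather than \eqref{eq_S}), so that the duality between $l$ and $m$ is applied consistently. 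The whole argument is a direct symmetric transcription of Theorem~\ref{th_0} with $l$ and $m$ interchanged.
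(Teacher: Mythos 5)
Your proposal is correct and follows exactly the paper's own argument: the paper likewise reduces integrability of $D_{m}$ to involutivity, translates $[mX,mY]\in D_{m}$ into $l[mX,mY]=0$ via \eqref{eq_M}, and concludes with \eqref{eq_T}. No discrepancies to report.
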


\begin{proof} The  distribution $D_{m}$ is integrable if and only if for any vector fields 
$X$ and $Y$ on $M$ we have 
$$[mX,  mY] \in D_{m}.$$
By virtue of \eqref{eq_M} and \eqref{eq_T} we get,
\begin{eqnarray*}
[mX,  mY] \in D_{m}&\Leftrightarrow&  l[mX,  mY]=0 \Leftrightarrow N_{l}(mX, mY)=0.
\end{eqnarray*}
\end{proof}

\begin{theorem}\label{th_2} Given an $F$-structure manifold $M$. The distributions $D_{l}$ 
and $D_{m}$ are both integrable if and only if 
\begin{eqnarray*}
N_{l}(X, Y)=0,
\end{eqnarray*}
or
\begin{eqnarray*}
l[mX,mY ] = -m[lX, lY ],
\end{eqnarray*}
for any vector fields $X$ and $Y$ on $M$.
\end{theorem}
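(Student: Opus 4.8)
The plan is to combine the two previous integrability theorems, Theorem \ref{th_0} and Theorem \ref{th_1}, with the structural decomposition of $N_l$ provided in Proposition \ref{pro_1} and Proposition \ref{pro_2}. The distributions $D_l$ and $D_m$ are \emph{both} integrable precisely when each is integrable separately, so by Theorem \ref{th_0} and Theorem \ref{th_1} this is equivalent to the simultaneous vanishing $N_l(lX, lY)=0$ and $N_l(mX, mY)=0$ for all vector fields $X, Y$ on $M$. Using \eqref{eq_W}, namely $N_l(X,Y)=N_l(lX,lY)+N_l(mX,mY)$, one direction is immediate: if both summands vanish, then $N_l(X,Y)=0$.

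For the converse I would exploit the orthogonality of the two pieces under the projectors. By \eqref{eq_S} we have $N_l(lX,lY)=m[lX,lY]\in \mathrm{Im}\,m=D_m$, while by \eqref{eq_T} we have $N_l(mX,mY)=l[mX,mY]\in \mathrm{Im}\,l=D_l$. Since $T_xM=D_l\oplus D_m$ by \eqref{eq_O}, the decomposition $N_l(X,Y)=N_l(lX,lY)+N_l(mX,mY)$ is a decomposition into the two complementary summands. Hence $N_l(X,Y)=0$ forces each component to vanish individually, giving back $N_l(lX,lY)=0$ and $N_l(mX,mY)=0$, which by the two earlier theorems means both distributions are integrable.

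Finally, to obtain the second stated equivalent condition, I would substitute \eqref{eq_S} and \eqref{eq_T} back in: $N_l(X,Y)=0$ reads $m[lX,lY]+l[mX,mY]=0$, i.e. $l[mX,mY]=-m[lX,lY]$, which is exactly the alternative formulation. This step is purely a rewriting using Proposition \ref{pro_1}, so it carries the same content as $N_l(X,Y)=0$.

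The main subtlety to be careful about is not the algebra but the logical packaging: one must justify that vanishing of the \emph{sum} $N_l(X,Y)$ implies vanishing of each term, and this is exactly where the direct-sum decomposition \eqref{eq_O} together with the image identifications \eqref{eq_S}--\eqref{eq_T} is essential, rather than merely formal. Once that point is made, the theorem follows with no further computation.
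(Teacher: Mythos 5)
Your proposal is correct, and its skeleton matches the paper's proof: both directions rest on Theorems \ref{th_0} and \ref{th_1} combined with the decomposition \eqref{eq_W}, and your forward implication is identical to the paper's. The one place you genuinely diverge is the converse. The paper extracts $N_{l}(lX,lY)=0$ and $N_{l}(mX,mY)=0$ from the identity $N_{l}(lX,lY)+N_{l}(mX,mY)=0$ by a substitution trick: replacing $X,Y$ by $lX,lY$ annihilates the second summand via $l^{2}=l$ and $ml=0$, while replacing by $mX,mY$ annihilates the first via $m^{2}=m$ and $lm=0$. You instead note via \eqref{eq_S} and \eqref{eq_T} that the two summands lie in the complementary subbundles $D_{m}$ and $D_{l}$ respectively, so the direct-sum decomposition \eqref{eq_O} forces each to vanish separately when the sum vanishes. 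Both arguments are sound and draw on the same stock of lemmas; yours is arguably more conceptual, making visible that the two integrability obstructions live in transverse distributions and requiring no re-substitution, whereas the paper's version is purely algebraic, using only the projector identities from Lemma \ref{lem_1} and thus not even needing the pointwise splitting of $T_{x}M$. Your closing step, rewriting $N_{l}(X,Y)=0$ as $l[mX,mY]=-m[lX,lY]$ by means of \eqref{eq_R}, is exactly the equivalence the paper intends, so nothing is missing.
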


\begin{proof} Suppose that $D_{l}$ and $D_{m}$ are both integrable. It follows from  
\eqref{eq_X} and \eqref{eq_Y} 
$$N_{l}(lX, lY)=0,\;\; N_{l}(mX, mY)=0.$$
By virtue of \eqref{eq_W} we have,
\begin{eqnarray*}
N_{l}(X, Y)&=&N_{l}(lX, lY) + N_{l}(mX, mY)=0.
\end{eqnarray*}
Conversely, assume that $N_{l}(X, Y)=0$. It follows from \eqref{eq_W} that
$$N_{l}(lX, lY)+ N_{l}(mX, mY)=0.$$
Replacing $X,Y$  by  $lX,lY$ $($resp. by  $mX,mY$ $)$, we get
$$N_{l}(lX, lY)=0,(\textit{resp}.\; N_{l}(mX, mY)=0.$$
then, $D_{l}$ and $D_{m}$ are both integrable.
\end{proof}

\begin{theorem}\label{th_4} Let M be an $F$-structure manifold. The distribution $D_{l}$ 
is integrable  if  and  only  if
\begin{eqnarray}\label{eq_1.3}
 N_{F}(X, Y)=l N_{F}(X, Y),
\end{eqnarray}
or
\begin{eqnarray*}
[FX, FY] = l[FX, FY],
\end{eqnarray*}
for any vector fields $X$ and $Y$  on $M$.
\end{theorem}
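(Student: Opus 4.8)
The plan is to connect the integrability criterion for $D_l$ already established in Theorem~\ref{th_0}, namely $N_l(lX,lY)=0$ (equivalently $m[lX,lY]=0$), with the new condition stated in terms of the Nijenhuis tensor $N_F$ of $F$ itself. The bridge is Proposition~\ref{pro_2}, specifically the identity \eqref{eq_W1}, which reads $mN_F(X,Y)=N_l(FX,FY)$, together with the equivalence in Proposition~\ref{pro_00} relating $mN_F(X,Y)=0$, $mN_F(FX,FY)=0$ and $mN_F(lX,lY)=0$. So first I would observe that the condition $N_F(X,Y)=lN_F(X,Y)$ is, by \eqref{eq_D} (since $l+m=I$), exactly the statement that $mN_F(X,Y)=0$.

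Next I would translate $mN_F(X,Y)=0$ into a statement about $N_l$. Replacing $X,Y$ by $lX,lY$ inside \eqref{eq_W1} and using that $Fl=F$ from \eqref{eq_G} gives $mN_F(lX,lY)=N_l(F lX, FlY)=N_l(FX,FY)$; more directly, setting the arguments appropriately and invoking Proposition~\ref{pro_00}, the vanishing of $mN_F$ on any of its equivalent argument-pairs is equivalent to $N_l(FX,FY)=0$. Since $\mathrm{Im}\,F_x=\mathrm{Im}\,l_x$ by \eqref{eq_N1} and $Fl=lF=F$, running $F X,FY$ over all vector fields produces exactly the same family of arguments as $lX,lY$; hence $N_l(FX,FY)=0$ for all $X,Y$ is equivalent to $N_l(lX,lY)=0$ for all $X,Y$. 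By Theorem~\ref{th_0} the latter is precisely the integrability of $D_l$, closing the chain of equivalences.

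For the second, alternative form of the condition, I would unwind $N_F(X,Y)=lN_F(X,Y)$ using the explicit formula \eqref{Nijenhuis} for $N_F$ and the projection identities. The key simplification is that $F^2[X,Y]$ and the two mixed terms $F[FX,Y]$, $F[X,FY]$ all lie in $\mathrm{Im}\,F=\mathrm{Im}\,l$ and are therefore fixed by $l$ (using $lF=F$), so applying $l$ to $N_F(X,Y)$ leaves every term except $[FX,FY]$ unchanged. Thus $N_F(X,Y)-lN_F(X,Y)=[FX,FY]-l[FX,FY]$, which is exactly identity \eqref{eq_1.1} already derived in Proposition~\ref{pro_02}. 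Consequently $N_F(X,Y)=lN_F(X,Y)$ holds if and only if $[FX,FY]=l[FX,FY]$, giving the second stated criterion.

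The main obstacle I anticipate is the bookkeeping in the argument-substitution step: one must verify carefully that quantifying over $FX,FY$ versus $lX,lY$ yields the same set of conditions, which relies on $\mathrm{Im}\,F=\mathrm{Im}\,l$ and on the operators $l,F$ being interchangeable via \eqref{eq_G} rather than on any single pointwise substitution. Everything else is a direct application of \eqref{eq_D}, \eqref{eq_1.1}, \eqref{eq_W1}, and Theorem~\ref{th_0}, so the proof should reduce to assembling these equivalences in the correct order.
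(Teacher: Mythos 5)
Your proposal is correct, and it is not circular: \eqref{eq_W1}, Proposition~\ref{pro_00} and Theorem~\ref{th_0} are all established before Theorem~\ref{th_4}. Half of your argument coincides with the paper's: the equivalence $N_{F}(X,Y)=lN_{F}(X,Y)\Leftrightarrow [FX,FY]=l[FX,FY]$ is obtained in both cases from the identity \eqref{eq_1.1}, and your term-by-term derivation of it (every term of $N_{F}$ except $[FX,FY]$ is fixed by $l$ because $lF=F$) is exactly how the paper derives \eqref{eq_1.1} inside Proposition~\ref{pro_02}. Where you genuinely diverge is the link to integrability of $D_{l}$: the paper stays at the level of brackets, passing from $m[lX,lY]=0$, i.e. $[lX,lY]=l[lX,lY]$, to $[FX,FY]=l[FX,FY]$ by the substitution $X\mapsto FX$ (using \eqref{eq_G}) and back by $X\mapsto -(\alpha F^{K-1}+\beta F^{K-2})X$ (which turns $FX$ into $lX$ via \eqref{eq_B}); you instead route through $N_{l}$ via $mN_{F}(X,Y)=N_{l}(FX,FY)$ from \eqref{eq_W1}, Proposition~\ref{pro_00} and Theorem~\ref{th_0}, in effect anticipating the chain of equivalences the paper only assembles later in Theorem~\ref{th_5}. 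The two routes share the same engine underneath, since $N_{l}(lX,lY)=m[lX,lY]$ and $N_{l}(FX,FY)=m[FX,FY]$; and the quantifier-swap step you rightly flag (vanishing of $N_{l}(FX,FY)$ for all $X,Y$ versus vanishing of $N_{l}(lX,lY)$ for all $X,Y$) is made rigorous precisely by the two pointwise substitutions just quoted, or alternatively by tensoriality of $N_{l}$ together with $\mathrm{Im}\,F_{x}=\mathrm{Im}\,l_{x}$ from \eqref{eq_N1}, so your appeal to \eqref{eq_N1} and \eqref{eq_G} is legitimate once spelled out this way. What your packaging buys is that Theorem~\ref{th_5} then requires almost no additional work; what the paper's buys is independence from the $N_{l}$ machinery, keeping the proof of Theorem~\ref{th_4} self-contained at the level of brackets.
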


\begin{proof} Suppose that, $D_{l}$ is integrable, then for any vector fields $X$ and $Y$ 
on $M$ we have 
$$[lX,  lY] \in D_{l}.$$
Using \eqref{eq_D} and \eqref{eq_L}, we get
\begin{eqnarray}\label{eq_1.4}
[lX,  lY] \in D_{l}&\Leftrightarrow&  m[lX,  lY]=0\nonumber\\
&\Leftrightarrow& [lX,  lY]-l[lX,  lY]=0.
\end{eqnarray}
In the last equation we replace $X, Y$  with  $FX, FY$, respectively and using \eqref{eq_G} 
we  obtain
\begin{eqnarray}\label{eq_1.5}
[F X, F Y]-l[F X, F Y]=0.
\end{eqnarray}
Using \eqref{eq_1.1}, we find $N_{F}(X, Y)=lN_{F}(X, Y)$.\\
Conversely, suppose that  $N_{F}(X, Y)=lN_{F}(X, Y)$, then from \eqref{eq_1.1} we  obtain (\ref{eq_1.5}).
Replacing $X$ and $Y$  with  $-(\alpha F^{K-1}+ \beta F^{K-2})X$ and 
$-(\alpha F^{K-1}+ \beta F^{K-2})Y$, respectively and using 
\eqref{eq_B} we  obtain (\ref{eq_1.4}),  which  implies $[lX,  lY] \in D_{l}$ 
i.e $D_{l}$ is integrable.
\end{proof}

\begin{theorem}\label{th_5}	
Given an $F$-structure manifold $M$, the following conditions are equivalent
\begin{eqnarray*}
(i) && D_{l}\;\textit{is\;integrable},\\
(ii) && N_{l}(lX, lY)=0,\\		
(iii) && N_{F}(X, Y)=lN_{F}(X, Y),\\
(iv) && mN_{F}(X, Y)=0,\\
(v) && mN_{F}(FX, FY)=0, \\
(vi) &&mN_{F}(lX, lY)=0,\\
(vii) && N_{l}(FX, FY)=0,
\end{eqnarray*}
for any vector fields $X$ and $Y$  on $M$.
\end{theorem}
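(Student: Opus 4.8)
The plan is to prove the chain of equivalences in Theorem \ref{th_5} by assembling the identities and theorems already established earlier in the paper, rather than computing anything new. The claim lists seven conditions, and the cleanest strategy is to verify enough implications to close a cycle (or a fan) connecting all of them. I would organize the argument around the fact that several of these equivalences have already been proved verbatim elsewhere, so the work is mostly citation and bookkeeping.

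First I would establish the core equivalence $(i)\Leftrightarrow(ii)$, which is exactly the content of Theorem \ref{th_0} (the distribution $D_l$ is integrable iff $N_l(lX,lY)=0$). Next, I would obtain $(i)\Leftrightarrow(iii)$ directly from Theorem \ref{th_4}, since that theorem states precisely that $D_l$ is integrable iff $N_F(X,Y)=lN_F(X,Y)$. The equivalence $(iii)\Leftrightarrow(iv)$ is immediate from the relation $l+m=I$ in \eqref{eq_D}: the condition $N_F=lN_F$ is the same as $mN_F=(I-l)N_F=0$, so this step is a one-line algebraic rewrite. Then $(iv)\Leftrightarrow(v)\Leftrightarrow(vi)$ is nothing but Proposition \ref{pro_00}, whose chain of equivalences $mN_F(X,Y)=0\Leftrightarrow mN_F(FX,FY)=0\Leftrightarrow mN_F(lX,lY)=0$ is exactly what is needed.

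Finally, I would connect condition $(vii)$, $N_l(FX,FY)=0$, using identity \eqref{eq_W1}, namely $mN_F(X,Y)=N_l(FX,FY)$. This single identity gives $(iv)\Leftrightarrow(vii)$ at once, since the left-hand side vanishing is equivalent to the right-hand side vanishing. Putting these together yields the cycle $(i)\Leftrightarrow(ii)$, $(i)\Leftrightarrow(iii)\Leftrightarrow(iv)\Leftrightarrow(v)\Leftrightarrow(vi)$, and $(iv)\Leftrightarrow(vii)$, which links all seven conditions. I would write the proof as a short sequence of displayed implications, citing Theorem \ref{th_0}, Theorem \ref{th_4}, Proposition \ref{pro_00}, equation \eqref{eq_D}, and equation \eqref{eq_W1} at the appropriate points.

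I do not anticipate a genuine obstacle here, since every link in the chain is either a restatement of a prior theorem or a trivial consequence of $l+m=I$; the only thing requiring care is making sure the cycle of implications actually touches every one of the seven items without a gap. The mildest point of attention is the $(iii)\Leftrightarrow(iv)$ step, where one must note that $N_F(X,Y)=lN_F(X,Y)$ and $mN_F(X,Y)=0$ are logically identical via \eqref{eq_D}; but this is routine. Thus the entire proof reduces to citing the earlier results in the right order.
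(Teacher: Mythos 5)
Your proposal is correct and follows essentially the same route as the paper's own proof: $(i)\Leftrightarrow(ii)$ via Theorem \ref{th_0}, $(i)\Leftrightarrow(iii)$ via Theorem \ref{th_4}, $(iii)\Leftrightarrow(iv)$ via \eqref{eq_D}, $(iv)\Leftrightarrow(v)\Leftrightarrow(vi)$ via Proposition \ref{pro_00}, and $(iv)\Leftrightarrow(vii)$ via \eqref{eq_W1}. The only difference is that the paper appends a (logically redundant) direct verification of $(vii)\Rightarrow(i)$ by substituting $-(\alpha F^{K-1}+\beta F^{K-2})X$ into $m[FX,FY]=0$, whereas your chain already closes without it.
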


\begin{proof}
\item (1) From  Theorem \ref{th_0}, we  have  $(i)\Leftrightarrow (ii)$.
\item (2) From  Theorem \ref{th_4},	we  have  $(i)\Leftrightarrow (iii)$, hence  $(ii)\Leftrightarrow (iii)$
\item (3) From \eqref{eq_D}, we get $(iii)\Leftrightarrow (iv)$ 
\item (4) From \eqref{eq_12.0}, we get   $(iv) \Leftrightarrow (v) \Leftrightarrow (vi)$.
\item (5) From \eqref{eq_W1}, we get   $(iv) \Leftrightarrow (vii)$, hence $(vi) \Leftrightarrow (vii)$.
\item (6) Suppose that $N_{l}(FX, FY)=0$. Comparing it with \eqref{eq_02} and \eqref{eq_W1},
we obtain $m[FX, FY]=0$. In this relation we replace $X$ by  $-(\alpha F^{K-1}+ \beta F^{K-2})X$ and  $Y$ by $-(\alpha F^{K-1}+ \beta F^{K-2})Y$, respectively,  we find
\begin{eqnarray*}
m[lX, lY]=0&\Leftrightarrow& [lX,  lY] \in D_{l}.
\end{eqnarray*}
Hence, $(vii)\Leftrightarrow (i)$.
\end{proof}

\begin{theorem}\label{th_7} Let M be an $F$-structure manifold. The distribution $D_{m}$ is  
integrable  if  and  only  if
\begin{eqnarray}\label{eq_1.7}
N_{F}(mX, mY)=0,
\end{eqnarray}
or
\begin{eqnarray*}
lN_{F}(mX, mY)=0,
\end{eqnarray*}
for any vector fields $X$ and $Y$  on $M$.
\end{theorem}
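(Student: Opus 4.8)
The plan is to reduce the claim to Theorem~\ref{th_1}, which already characterizes integrability of $D_m$ by the condition $N_l(mX,mY)=0$, equivalently $l[mX,mY]=0$. First I would note that the two displayed conditions in the statement coincide: combining \eqref{eq_00} and \eqref{eq_01} gives $N_F(mX,mY)=F^2[mX,mY]=lN_F(mX,mY)$, so their equivalence is automatic and it suffices to prove that $D_m$ is integrable if and only if $N_F(mX,mY)=0$. By Theorem~\ref{th_1} this is the same as establishing the equivalence $N_F(mX,mY)=0\Leftrightarrow N_l(mX,mY)=0$.

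For the forward implication I would invoke Proposition~\ref{pro_6}, namely $N_F(mX,mY)=F^2N_l(mX,mY)$. If $D_m$ is integrable then $N_l(mX,mY)=0$ by Theorem~\ref{th_1}, and applying $F^2$ gives $N_F(mX,mY)=0$ at once.

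The converse is the substantive direction, and it is where the hypothesis $K\geq 3$ enters. Here I must recover $N_l(mX,mY)=l[mX,mY]$ from $N_F(mX,mY)=F^2[mX,mY]$, that is, strip off the factor $F^2$. The key point is that, since $K\geq 3$, the expression $-(\alpha F^{K-2}+\beta F^{K-3})$ is a genuine polynomial in nonnegative powers of $F$, and by \eqref{eq_B} one has
\begin{eqnarray*}
-(\alpha F^{K-2}+\beta F^{K-3})\,F^2 &=& -(\alpha F^{K}+\beta F^{K-1})=l.
\end{eqnarray*}
Applying this operator to \eqref{eq_00} yields $N_l(mX,mY)=l[mX,mY]=-(\alpha F^{K-2}+\beta F^{K-3})N_F(mX,mY)$, so $N_F(mX,mY)=0$ forces $N_l(mX,mY)=0$, whence $D_m$ is integrable by Theorem~\ref{th_1}.

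I expect the main obstacle to be exactly this inversion of $F^2$: the operator $F^2$ is highly non-injective, its kernel being $D_m$, so one cannot naively cancel it. The resolution exploits that the bracket is only ever probed through $l$, and on $\mathrm{Im}\,l=\mathrm{Im}\,F$ (see \eqref{eq_N1}) the factor $F^2$ can be peeled off by the explicit polynomial above, which is legitimate precisely because $K\geq 3$. An alternative would be to observe that $F$ restricts to an isomorphism of $D_l$, since $\ker F\cap D_l=D_m\cap D_l=\{0\}$, but the polynomial identity is the cleaner route and keeps the argument uniform in $K$.
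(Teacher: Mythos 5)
Your proof is correct and takes essentially the same route as the paper: both hinge on the identity $N_{F}(mX,mY)=F^{2}[mX,mY]$ from \eqref{eq_00} and recover $l[mX,mY]=0$ by applying the polynomial operator $-(\alpha F^{K-2}+\beta F^{K-3})$, which is precisely where $K\geq 3$ enters, and both obtain the second form of the condition from \eqref{eq_01}. The only cosmetic difference is that you route the argument through Theorem~\ref{th_1} and Proposition~\ref{pro_6}, whereas the paper works directly from $[mX,mY]\in D_{m}\Leftrightarrow l[mX,mY]=0$ via \eqref{eq_M}, using $F^{2}l=F^{2}$ for the forward implication.
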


\begin{proof} The distribution $D_{m}$ is integrable if  and  only  if $[mX,  mY] \in D_{m}$ 
for any vector fields $X$ and $Y$ on $M$.\\
Using \eqref{eq_M}, we get
\begin{eqnarray*}
[mX,  mY] \in D_{m}&\Rightarrow&  l[mX,  mY]=0\\
&\Rightarrow& F^{2}l[mX,  mY]=0\\
&\Rightarrow& F^{2}[mX,  mY]=0.
\end{eqnarray*}
From \eqref{eq_00}, we get $N_{F}(mX, mY)=0$.\\
Conversely, assume  that $N_{F}(mX, mY)=0$, from \eqref{eq_00}, we find $F^{2}[mX,  mY]=0$.
We operate it by $-(\alpha F^{K-2}+ \beta F^{K-3})$, we get $l[mX,  mY]=0$, i.e.
$[mX,  mY] \in D_{m}$, hence the distribution $D_{m}$ is integrable.\\
Using \eqref{eq_01} we get $N_{F}(mX, mY)=0 \Leftrightarrow  lN_{F}(mX, mY)=0$. 
\end{proof}

By virtue of Proposition \ref{pro_6}, Theorem \ref{th_1} and Theorem \ref{th_7}, we get the following
Theorem.
\begin{theorem}\label{th_8} 
Given an $F$-structure manifold $M$, the following conditions are equivalent
\begin{eqnarray*}
(i) && D_{m}\;\textit{is\;integrable},\\
(ii) && N_{l}(mX, mY)=0,\\		
(iii) &&  N_{F}(mX, mY)=0,\\
(iv) &&  l N_{F}(mX, mY)=0,
\end{eqnarray*}
for any vector fields $X$ and $Y$  on $M$.
\end{theorem}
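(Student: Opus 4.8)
The plan is to assemble the four equivalences directly from the three ingredients cited just before the statement, since each of them already supplies part of the equivalence cycle and no new computation is really needed. First I would invoke Theorem~\ref{th_1}, which states precisely that $D_m$ is integrable if and only if $N_l(mX,mY)=0$; this gives $(i)\Leftrightarrow(ii)$. Next I would invoke Theorem~\ref{th_7}, which asserts that $D_m$ is integrable if and only if $N_F(mX,mY)=0$, and moreover (via \eqref{eq_01}, since both $N_F(mX,mY)$ and $lN_F(mX,mY)$ equal $F^2[mX,mY]$) if and only if $lN_F(mX,mY)=0$; this supplies $(i)\Leftrightarrow(iii)$ and $(iii)\Leftrightarrow(iv)$ at once. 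Chaining these together yields the full cycle $(i)\Leftrightarrow(ii)\Leftrightarrow(iii)\Leftrightarrow(iv)$.

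To make the bridge between the tensors $N_l$ and $N_F$ fully transparent, I would also record the direct identity furnished by Proposition~\ref{pro_6}, namely $N_F(mX,mY)=F^2 N_l(mX,mY)$. The forward implication $(ii)\Rightarrow(iii)$ is then immediate, as applying $F^2$ to $N_l(mX,mY)=0$ annihilates the right-hand side. The only point that demands a little care is the reverse passage $(iii)\Rightarrow(ii)$, because $F^2$ is not invertible on all of $T_xM$; rather than attempting to invert $F^2$, I would reuse the computation already carried out in the converse part of Theorem~\ref{th_7}, where operating on $F^2[mX,mY]=0$ by $-(\alpha F^{K-2}+\beta F^{K-3})$ produces $l[mX,mY]=0$ (using $-(\alpha F^{K-2}+\beta F^{K-3})F^2=l$ from \eqref{eq_B}), which by \eqref{eq_T} is exactly $N_l(mX,mY)=0$.

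I expect no genuine obstacle here, since the substance of the argument has been discharged in Theorems~\ref{th_1} and~\ref{th_7}. The single subtlety worth flagging is the non-invertibility of $F^2$ noted above, and it is resolved because $-(\alpha F^{K-2}+\beta F^{K-3})$ acts as a left inverse of $F^2$ on the image of $l$, precisely as exploited in the proof of Theorem~\ref{th_7}. The remaining work is purely organizational: listing the four conditions and drawing the implication arrows, so the proof collapses to a short citation of the three prior results.
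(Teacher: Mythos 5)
Your proposal is correct and takes essentially the same route as the paper, which states Theorem~\ref{th_8} as an immediate consequence of exactly the same three results (Proposition~\ref{pro_6}, Theorem~\ref{th_1} and Theorem~\ref{th_7}) with no further argument. Your extra care in handling $(iii)\Rightarrow(ii)$ via the left inverse $-(\alpha F^{K-2}+\beta F^{K-3})$ of $F^{2}$ and the identity $N_{l}(mX,mY)=l[mX,mY]$ from \eqref{eq_T} merely makes explicit a step the paper leaves implicit, and it is carried out correctly.
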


From Theorem \ref{th_4} and Theorem \ref{th_7} we deduce:

\begin{corollary}
Given an $F$-structure manifold $M$. If $F$ is an integrable structure, then both 
distributions $D_{l}$ and $D_{m}$ are integrable.
\end{corollary}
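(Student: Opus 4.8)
The plan is to exploit the single fact that makes $F$ integrable, namely the identical vanishing of its Nijenhuis tensor: as recalled just after \eqref{Nijenhuis}, the $F$-structure is integrable if and only if $N_{F}(X,Y)=0$ for all vector fields $X,Y$ on $M$. Once this hypothesis is in hand, both assertions reduce to substituting $N_{F}\equiv 0$ into the integrability criteria already established for the two distributions, so no genuinely new computation is required.

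First I would treat $D_{l}$ via Theorem \ref{th_4}, which states that $D_{l}$ is integrable if and only if $N_{F}(X,Y)=lN_{F}(X,Y)$. Since $N_{F}(X,Y)=0$ identically, the right-hand side is $l\cdot 0=0$, so the equality $N_{F}(X,Y)=lN_{F}(X,Y)$ holds trivially; hence $D_{l}$ is integrable. Equivalently, one may read off $mN_{F}(X,Y)=0$ and appeal to the chain of equivalences in Theorem \ref{th_5}.

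Next I would treat $D_{m}$ via Theorem \ref{th_7}, which characterizes integrability of $D_{m}$ by $N_{F}(mX,mY)=0$. Because $N_{F}$ vanishes on every pair of vector fields, it vanishes in particular on the pair $mX,mY$, so the criterion is satisfied and $D_{m}$ is integrable.

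There is no real obstacle here: the content of the corollary lies entirely in Theorems \ref{th_4} and \ref{th_7}, and the corollary is simply the observation that the vanishing of $N_{F}$ makes both of their hypotheses automatic. The only point worth stating explicitly is that integrability of the $F$-structure is taken to mean vanishing of the \emph{full} Nijenhuis tensor $N_{F}$, and not merely of some projected part of it, which is exactly what licenses evaluating $N_{F}$ on the specialized arguments appearing in the two criteria.
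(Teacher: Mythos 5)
Your proof is correct and follows exactly the paper's route: the paper deduces the corollary directly from Theorem \ref{th_4} and Theorem \ref{th_7}, and your argument is precisely the substitution of $N_{F}\equiv 0$ into those two criteria. Nothing is missing; you have simply written out in full what the paper leaves implicit.
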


\begin{remark}
Given an $F$-structure manifold $M$. If both distributions $D_{l}$ and $D_{m}$ are integrable, $F$ not necessary integrable. see (Example \ref{ex_4}).
\end{remark}

\begin{theorem}\label{th_9} Given an $F$-structure manifold $M$. The distributions $D_{l}$ 
and $D_{m}$ are both integrable if and only if 
\begin{eqnarray}\label{eq_2.0}
N_{F}(X, Y)=lN_{F}(lX, lY)+N_{F}(lX, mY)+N_{F}(mX, lY),
\end{eqnarray}
for any vector fields $X$ and $Y$  on $M$.
\end{theorem}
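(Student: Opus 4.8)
The plan is to reduce the single identity \eqref{eq_2.0} to the two separate integrability conditions already characterized in Theorem \ref{th_5} and Theorem \ref{th_8}. Recall that $D_l$ is integrable if and only if $mN_F(lX,lY)=0$ (condition $(vi)$ of Theorem \ref{th_5}), while $D_m$ is integrable if and only if $N_F(mX,mY)=0$ (condition $(iii)$ of Theorem \ref{th_8}). Thus it suffices to show that \eqref{eq_2.0} holds for all $X,Y$ exactly when both of these vanishing conditions hold simultaneously.

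First I would use the fact that $N_F$ is a tensor, hence additive in each argument. Writing $X=lX+mX$ and $Y=lY+mY$ via \eqref{eq_D} and expanding gives
\begin{eqnarray*}
N_F(X,Y)=N_F(lX,lY)+N_F(lX,mY)+N_F(mX,lY)+N_F(mX,mY).
\end{eqnarray*}
Equating this with the right-hand side of \eqref{eq_2.0}, cancelling the two mixed terms $N_F(lX,mY)+N_F(mX,lY)$, and using $N_F-lN_F=mN_F$ (a consequence of \eqref{eq_D}), I find that \eqref{eq_2.0} is equivalent to
\begin{eqnarray*}
mN_F(lX,lY)+N_F(mX,mY)=0,
\end{eqnarray*}
for all vector fields $X,Y$.

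For the forward implication, if $D_l$ and $D_m$ are both integrable then $mN_F(lX,lY)=0$ and $N_F(mX,mY)=0$ by the two cited conditions, so their sum vanishes and \eqref{eq_2.0} follows. For the converse, assuming the displayed reformulation for all $X,Y$, I would decouple the two terms by substitution, relying on $l^2=l$, $m^2=m$ and $lm=ml=0$ from Lemma \ref{lem_1}. Replacing $X,Y$ by $lX,lY$ annihilates the second term since $m(lX)=0$, leaving $mN_F(lX,lY)=0$ and hence the integrability of $D_l$; replacing $X,Y$ by $mX,mY$ annihilates the first term since $l(mX)=0$, leaving $N_F(mX,mY)=0$ and hence the integrability of $D_m$.

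The bilinear expansion and the cancellation are routine; the only substantive step is the substitution trick in the converse, which is precisely what splits the single tensorial identity \eqref{eq_2.0} into the two independent integrability conditions. I expect no further difficulty, as the projection identities of Lemma \ref{lem_1} make both substitutions collapse cleanly.
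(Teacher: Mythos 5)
Your proof is correct and takes essentially the same route as the paper: the paper likewise expands $N_{F}(X,Y)=N_{F}(lX,lY)+N_{F}(lX,mY)+N_{F}(mX,lY)+N_{F}(mX,mY)$ and reduces \eqref{eq_2.0} to the intermediate identity $lN_{F}(lX,lY)=N_{F}(lX,lY)+N_{F}(mX,mY)$, which is exactly your relation $mN_{F}(lX,lY)+N_{F}(mX,mY)=0$ rewritten via $m=I-l$, and it then decouples the two conditions by the same projector substitutions. The only cosmetic difference is that the paper substitutes only $mX,mY$ in the converse and reads off the $l$-condition directly, whereas you symmetrically substitute $lX,lY$ as well; both steps rest on the identities $l^{2}=l$, $m^{2}=m$, $lm=ml=0$ of Lemma \ref{lem_1}.
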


\begin{proof}
$i)$ Suppose that  $l$ and $m$  are both integrable. Using \eqref{eq_D}, we get 
\begin{eqnarray}\label{eq_2.1}
N_{F}(X, Y)&=&N_{F}(lX+mX, lY+mY)\notag\\
&=&N_{F}(lX, lY)+N_{F}(lX, mY)+N_{F}(mX, lY)+N_{F}(mX, mY).
\end{eqnarray}
Then from \eqref{eq_1.3} and \ref{eq_1.7}, we have 
\begin{eqnarray*}
N_{F}(X, Y)=l N_{F}(X, Y)\; \textit{and}\; N_{F}(mX, mY)=0.
\end{eqnarray*}
By virtue of (\ref{eq_2.1}), we get (\ref{eq_2.0}).\\
$ii)$ Conversely, assume that (\ref{eq_2.0}) is satisfied. Using (\ref{eq_2.1}), we find
\begin{eqnarray*}
lN_{F}(lX, lY)&=&N_{F}(lX, lY)+N_{F}(mX, mY).
\end{eqnarray*}
In this relation we replace $X$ and $Y$ with $mX$ and $mY$ respectively, we obtain
$N_{F}(mX, mY)=0$, as well $lN_{F}(lX, lY)=N_{F}(lX, lY)$. i.e. $l$ and $m$  are both 
integrable.
\end{proof}

By virtue of Proposition \ref{pro_1.1}, Theorem \ref{th_2} and Theorem \ref{th_9}, we get the following Theorem. 

\begin{theorem}\label{th_10.1} Given an $F$-structure manifold $M$, the following conditions are equivalent
\begin{eqnarray*}
(i) &&l \; \textit{and} \; m \; \textit{are integrable}\\	
(ii) && D_{l}\;\textit{and}\; D_{m}\;\textit{are integrable}\\
(iii) && N_{l}(X, Y)=0,\\
(iv) && N_{F}(X, Y)=lN_{F}(lX, lY)+N_{F}(lX, mY)+N_{F}(mX, lY),
\end{eqnarray*}
for any vector fields $X$ and $Y$  on $M$.
\end{theorem}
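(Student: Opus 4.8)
The plan is to prove Theorem \ref{th_10.1} by establishing a chain of equivalences $(i)\Leftrightarrow(ii)\Leftrightarrow(iii)\Leftrightarrow(iv)$, invoking the three results named in the sentence that precedes the theorem. The essential observation is that all four conditions have already been shown, in earlier statements, to be equivalent to the single master condition $N_{l}(X,Y)=0$; so the whole theorem is really a bookkeeping assembly of Proposition \ref{pro_1.1}, Theorem \ref{th_2}, and Theorem \ref{th_9}.

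First I would record $(i)\Leftrightarrow(iii)$. This is exactly the content of Proposition \ref{pro_1.1}, which states that $l$ and $m$ are both integrable if and only if $N_{l}(X,Y)=0$ (equivalently $m[lX,lY]=-l[mX,mY]$). Next I would record $(ii)\Leftrightarrow(iii)$, which is precisely Theorem \ref{th_2}: the distributions $D_{l}$ and $D_{m}$ are both integrable if and only if $N_{l}(X,Y)=0$. Combining these two immediately yields $(i)\Leftrightarrow(ii)\Leftrightarrow(iii)$. It is worth noting that the equivalence $(i)\Leftrightarrow(ii)$ is also transparent conceptually, since by Theorem \ref{th_5} (resp. Theorem \ref{th_8}) integrability of the operator $l$ (resp. $m$) is tantamount to integrability of the distribution $D_{l}=\operatorname{Im} l$ (resp. $D_{m}=\operatorname{Im} m$); but routing everything through $N_{l}(X,Y)=0$ is the cleanest path.

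Finally I would close the loop with $(ii)\Leftrightarrow(iv)$, which is exactly Theorem \ref{th_9}: the distributions $D_{l}$ and $D_{m}$ are both integrable if and only if the identity
\begin{eqnarray*}
N_{F}(X, Y)=lN_{F}(lX, lY)+N_{F}(lX, mY)+N_{F}(mX, lY)
\end{eqnarray*}
holds for all vector fields $X,Y$. Attaching this to the previously obtained string gives the full quadruple equivalence. Since each arrow is quoted verbatim from an earlier result, no new computation is required, and the proof consists only in citing Proposition \ref{pro_1.1}, Theorem \ref{th_2}, and Theorem \ref{th_9} in sequence.

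I do not anticipate a genuine mathematical obstacle here, because every hard step has been discharged upstream. The only point demanding mild care is making sure the middle condition is consistently the same object: Proposition \ref{pro_1.1} and Theorem \ref{th_2} both pivot on $N_{l}(X,Y)=0$, so I would phrase the write-up so that $(iii)$ is the common hub through which $(i)$, $(ii)$, and $(iv)$ are all connected, rather than trying to link $(i)$ and $(iv)$ directly. The genuinely substantive work — the expansion of $N_{F}(X,Y)$ via \eqref{eq_2.1} and the reduction showing $N_{F}(mX,mY)=0$ together with $lN_{F}(lX,lY)=N_{F}(lX,lY)$ — lives inside Theorem \ref{th_9} and need not be reproduced.
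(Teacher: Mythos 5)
Your proposal is correct and follows exactly the paper's route: the paper gives no separate proof but simply assembles Proposition \ref{pro_1.1} (for $(i)\Leftrightarrow(iii)$), Theorem \ref{th_2} (for $(ii)\Leftrightarrow(iii)$), and Theorem \ref{th_9} (for $(ii)\Leftrightarrow(iv)$), which is precisely your chain through the hub condition $N_{l}(X,Y)=0$. Nothing is missing, since each cited result is stated as a full equivalence.
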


\section{Partial integrability and complete integrability of $F$-structure}\label{Sec6}
Suppose that the distribution $D_{l}$ is integrable and take an arbitrary vector field $U$ in an integral manifold of $D_{l}$. We define an operator $\widetilde{F}$ by 
\begin{eqnarray*}
\widetilde{F}U = FU,
\end{eqnarray*}	 
then $\widetilde{F}$ leaves invariant tangent spaces of every integral manifolds of $D_{l}$. Also,  
\begin{eqnarray*}
(\alpha\widetilde{F}^{K}+ \beta \widetilde{F}^{K-1})^{1/2},
\end{eqnarray*}	
acts as an almost product structure on each integral manifold of $D_{l}$. 

For any vector fields $U$ and $V$ tangent to integral manifold of $D_{l}$, we denote by 
\begin{eqnarray*}
N_{\widetilde{F}}(U ,V)=[\widetilde{F}U, \widetilde{F}V] - \widetilde{F}[\widetilde{F}U, V] -\widetilde{F}[U, \widetilde{F}V] + \widetilde{F}^{2}[U, V],
\end{eqnarray*}
the Nijenhuis tensor of the structure $\widetilde{F}$ induced on each integral manifold of $D_{l}$ from the structure $F$. Then we have 
\begin{eqnarray}\label{eq_VV}
N_{\widetilde{F}}(lX ,lY)=N_{F}(lX ,lY),
\end{eqnarray}
for any vector fields $X$ and $Y$ on $M$. Indeed since the distribution $D_{l}$  is integrable, we find
\begin{eqnarray*}
N_{\widetilde{F}}(lX ,lY)&=&[\widetilde{F}lX, \widetilde{F}lY] - \widetilde{F}[\widetilde{F}lX, lY] -\widetilde{F}[lX, \widetilde{F}lY] + \widetilde{F}^{2}[lX, lY]\\
&=&[FlX, FlY] - \widetilde{F}[FlX, lY] -\widetilde{F}[lX, FlY] + F^{2}[lX, lY]\\
&=&[FlX, FlY] - F[lFX, lY] -F[lX, lFY] + F^{2}[lX, lY]\\
&=&N_{F}(lX ,lY).
\end{eqnarray*}

\begin{definition}\cite{Y.K}\label{de_01}
We call an $F$-structure to be partially integrable if the distribution $D_{l}$ is integrable 
and the structure $\widetilde{F}$ induced from $F$ on each integral manifold of $D_{l}$ is integrable. see\cite{D.R,S.V}.
\end{definition}

\begin{theorem}\label{th_10} Given an $F$-structure manifold $M$. A necessary and sufficient condition for an $F$-structure to be partially integrable is that one of the following equivalent conditions be satisfied:  
\begin{eqnarray}\label{eq_WW}
N_{F}(lX ,lY)=0,
\end{eqnarray}
or
\begin{eqnarray*}
N_{F}(FX, FY)=0,
\end{eqnarray*}
for any vector fields $X$ and $Y$  on $M$.
\end{theorem}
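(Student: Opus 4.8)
The plan is to prove that an $F$-structure is partially integrable if and only if $N_F(lX,lY)=0$, and that this condition is equivalent to $N_F(FX,FY)=0$. By Definition \ref{de_01}, partial integrability means two things simultaneously: the distribution $D_l$ is integrable, and the induced structure $\widetilde{F}$ on each integral manifold of $D_l$ is integrable. I would organize the proof around these two conjuncts and use the key identity \eqref{eq_VV}, namely $N_{\widetilde{F}}(lX,lY)=N_F(lX,lY)$, as the bridge between the intrinsic Nijenhuis tensor on the leaves and the ambient tensor $N_F$.

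First I would handle the necessity direction. Assume the $F$-structure is partially integrable. Then $D_l$ is integrable, so \eqref{eq_VV} is available, and the integrability of $\widetilde{F}$ on each leaf means $N_{\widetilde{F}}(U,V)=0$ for all $U,V$ tangent to a leaf; taking $U=lX$, $V=lY$ and applying \eqref{eq_VV} immediately gives $N_F(lX,lY)=0$, which is \eqref{eq_WW}. For the converse, suppose $N_F(lX,lY)=0$. The first task is to recover integrability of $D_l$: by Theorem \ref{th_5}, integrability of $D_l$ is equivalent to $N_l(lX,lY)=0$, so I would show $N_F(lX,lY)=0$ forces $m[lX,lY]=0$. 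This is where I expect the main subtlety to lie, since $N_F(lX,lY)=0$ is a statement about $F$ while the integrability criterion \eqref{eq_X} is phrased via $m[lX,lY]$; I would apply the projector $m$ to $N_F(lX,lY)=0$ and invoke \eqref{eq_04} (which gives $mN_F(lX,lY)=m[FX,FY]$) together with Theorem \ref{th_5}(vi) to conclude $D_l$ is integrable. Once $D_l$ is known to be integrable, \eqref{eq_VV} applies and turns the hypothesis $N_F(lX,lY)=0$ into $N_{\widetilde{F}}(lX,lY)=0$, i.e. the induced structure on each leaf is integrable; hence the $F$-structure is partially integrable.

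Finally I would establish the equivalence $N_F(lX,lY)=0 \Leftrightarrow N_F(FX,FY)=0$. This is essentially Proposition \ref{pro_01}, which already gives $N_F(FX,FY)=0 \Leftrightarrow N_F(lX,lY)=0$ by the substitution $X\mapsto -(\alpha F^{K-1}+\beta F^{K-2})X$ (and symmetrically in $Y$), using $lX = -(\alpha F^K+\beta F^{K-1})X$ from \eqref{eq_B} together with $lF=F$ from \eqref{eq_G}. So I would simply cite Proposition \ref{pro_01} to close the chain of equivalent conditions.

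The step I expect to be the main obstacle is recovering the integrability of $D_l$ from the single hypothesis $N_F(lX,lY)=0$, because Definition \ref{de_01} builds integrability of $D_l$ into partial integrability, yet in the converse direction we are only handed a condition on $N_F$; the delicate point is that identity \eqref{eq_VV} is valid only after $D_l$ is already known to be integrable, so one must first extract integrability of $D_l$ by a separate argument (via $m N_F(lX,lY)=m[FX,FY]=0$ and Theorem \ref{th_5}) before the leaf-wise Nijenhuis identity can legitimately be used. Care is needed not to invoke \eqref{eq_VV} prematurely, since it presupposes the very integrability we are trying to prove.
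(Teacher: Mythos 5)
Your proposal is correct and follows essentially the same route as the paper's own proof: both directions rest on the identity \eqref{eq_VV}, the equivalence $N_{F}(lX,lY)=0 \Leftrightarrow N_{F}(FX,FY)=0$ from Proposition \ref{pro_01}, and the criterion $mN_{F}(lX,lY)=0$ from Theorem \ref{th_5} to recover integrability of $D_{l}$. Your one refinement --- establishing integrability of $D_{l}$ \emph{before} invoking \eqref{eq_VV}, since that identity is derived under the assumption that $D_{l}$ is integrable --- is a legitimate tightening of the paper's converse, which invokes \eqref{eq_VV} first and only afterwards verifies that $D_{l}$ is integrable.
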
 

\begin{proof}
Suppose that $F$-structure is partially integrable, then from \eqref{eq_12.1} and \eqref{eq_VV}, we find $N_{\widetilde{F}}(lX ,lY)=0\Leftrightarrow N_{F}(lX,lY)=0 \Leftrightarrow N_{F}(FX,FY)=0$.\\ 
Conversely, from \eqref{eq_12.1}, we have $N_{F}(lX ,lY)=0 \Leftrightarrow N_{F}(FX ,FY)=0$, then the, by \eqref{eq_VV}, the structure $\widetilde{F}$ is integrable. Also $N_{F}(lX ,lY)=0$,  implies   $mN_{F}(lX ,lY)=0$, by Theorem \ref{th_5}, we find, $D_{l}$ is integrable. Thus $F$-struoture is partially integrable.	
\end{proof}

\begin{definition}\cite{Aqe}\label{de_02}
Given an $F$-structure manifold $M$. An $F$-structure is said to be completely integrable if the distribution $D_{l}$ and $D_{m}$ are both integrable, and the structure $\widetilde{F}$ induced from $F$ on each integral manifold of $D_{l}$ is integrable. .
\end{definition}

From Definition \ref{de_01} and Definition \ref{de_02}, we have the following theorem. 

\begin{theorem}\label{th_11} Given an $F$-structure manifold $M$. A necessary and sufficient condition for an f-structure to be completely integrable is that the distribution $D_{m}$ is integrable and that the $F$-structure  is partially integrable.
\end{theorem}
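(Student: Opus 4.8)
The plan is to prove Theorem \ref{th_11} as an essentially immediate consequence of the two governing definitions, Definition \ref{de_01} and Definition \ref{de_02}, by unwinding what ``completely integrable'' and ``partially integrable'' mean and comparing their requirements. The statement asserts that complete integrability is equivalent to the conjunction of two conditions: that $D_{m}$ is integrable, and that the $F$-structure is partially integrable. Since both notions are stated in terms of the same underlying integrability conditions, the argument will be purely logical, matching up clauses on each side of the equivalence.

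First I would recall, from Definition \ref{de_02}, that the $F$-structure is completely integrable precisely when three conditions hold simultaneously: $D_{l}$ is integrable, $D_{m}$ is integrable, and the induced structure $\widetilde{F}$ is integrable on each integral manifold of $D_{l}$. Next I would recall, from Definition \ref{de_01}, that the $F$-structure is partially integrable precisely when two of those conditions hold: $D_{l}$ is integrable and $\widetilde{F}$ is integrable. The forward direction is then immediate: if the $F$-structure is completely integrable, all three conditions hold, so in particular $D_{m}$ is integrable, and also $D_{l}$ together with the integrability of $\widetilde{F}$ gives partial integrability. For the converse, assume $D_{m}$ is integrable and the $F$-structure is partially integrable; partial integrability supplies integrability of $D_{l}$ and of $\widetilde{F}$, and combining these with the assumed integrability of $D_{m}$ recovers exactly the three clauses of Definition \ref{de_02}, so the $F$-structure is completely integrable.

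There is essentially no hard part here, since the result is a bookkeeping statement about how the three atomic conditions ``$D_{l}$ integrable'', ``$D_{m}$ integrable'', and ``$\widetilde{F}$ integrable'' are bundled into the two defined terms. The only point requiring any care is to confirm that partial integrability genuinely packages the pair ($D_{l}$ integrable, $\widetilde{F}$ integrable) and nothing more, so that the remaining clause needed for complete integrability is exactly the integrability of $D_{m}$; this is precisely what Definition \ref{de_01} states. I would therefore write the proof as a short two-way implication, citing Definition \ref{de_01} and Definition \ref{de_02} directly and listing the matching clauses in each direction, with no appeal to the Nijenhuis-tensor computations of the earlier sections being necessary beyond what those definitions already encapsulate.
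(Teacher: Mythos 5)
Your proposal is correct and matches the paper exactly: the paper offers no separate proof for Theorem \ref{th_11}, stating it as an immediate consequence of Definition \ref{de_01} and Definition \ref{de_02}, which is precisely the clause-matching argument you spell out. Your write-up merely makes explicit the bookkeeping the paper leaves implicit, so there is nothing to add or correct.
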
 

\begin{theorem}\label{th_12} Given an $F$-structure manifold $M$. In order that the $F$-structure to be completely integrable, it is necessary and sufficient that
\begin{eqnarray}\label{eq_YY}
N_{F}(X ,Y)=N_{F}(lX ,mY)+ N_{F}(mX, lY).
\end{eqnarray}
for any vector fields $X$ and $Y$ on $M$.
\end{theorem}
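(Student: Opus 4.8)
The plan is to characterize complete integrability through the decomposition of $N_F(X,Y)$ obtained by writing $X = lX + mX$ and $Y = lY + mY$. By the same bilinear expansion used in \eqref{eq_2.1}, we have the identity
\begin{eqnarray*}
N_{F}(X, Y)=N_{F}(lX, lY)+N_{F}(lX, mY)+N_{F}(mX, lY)+N_{F}(mX, mY),
\end{eqnarray*}
valid for all vector fields $X,Y$ on $M$. This reduces the problem to controlling the two ``diagonal'' terms $N_F(lX,lY)$ and $N_F(mX,mY)$, since the two ``mixed'' terms $N_F(lX,mY)$ and $N_F(mX,lY)$ are exactly the ones appearing on the right-hand side of \eqref{eq_YY}.

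First I would invoke Theorem \ref{th_11}: the $F$-structure is completely integrable if and only if $D_m$ is integrable and the $F$-structure is partially integrable. By Theorem \ref{th_8}, integrability of $D_m$ is equivalent to $N_F(mX,mY)=0$, and by Theorem \ref{th_10}, partial integrability is equivalent to $N_F(lX,lY)=0$. Thus complete integrability holds if and only if both diagonal terms vanish, i.e.
\begin{eqnarray*}
N_{F}(lX, lY)=0 \quad\textit{and}\quad N_{F}(mX, mY)=0,
\end{eqnarray*}
for all $X,Y$. Substituting these two vanishing conditions into the bilinear expansion immediately yields \eqref{eq_YY}, which establishes the necessity direction.

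For the converse, I would assume \eqref{eq_YY} and subtract it from the full bilinear expansion, which gives the single relation
\begin{eqnarray*}
N_{F}(lX, lY)+N_{F}(mX, mY)=0,
\end{eqnarray*}
for all $X,Y$. The decoupling trick is to replace the pair $(X,Y)$ by $(lX,lY)$ and separately by $(mX,mY)$; using the projection identities $l^2=l$, $m^2=m$, $lm=ml=0$ from Lemma \ref{lem_1}, each substitution annihilates one of the two terms and isolates the other, forcing $N_F(lX,lY)=0$ and $N_F(mX,mY)=0$ individually. The main obstacle — really the only point requiring care — is verifying that these substitutions behave correctly under the projectors: one must confirm that $N_F(l(mX),l(mY)) = N_F(0,0) = 0$ and similarly for the mixed cross-terms, which follows from $lm=0$ together with the tensorial (bilinear, antisymmetric) nature of $N_F$. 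Once both diagonal terms are shown to vanish, Theorems \ref{th_10} and \ref{th_8} give partial integrability and integrability of $D_m$ respectively, and Theorem \ref{th_11} then delivers complete integrability.
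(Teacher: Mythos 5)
Your proof is correct and takes essentially the same route as the paper's: both rest on the bilinear expansion \eqref{eq_2.1}, characterize complete integrability via Theorems \ref{th_8}, \ref{th_10} and \ref{th_11} as the simultaneous vanishing of $N_{F}(lX,lY)$ and $N_{F}(mX,mY)$, and recover these from \eqref{eq_YY} by substituting projected arguments into $N_{F}(lX,lY)+N_{F}(mX,mY)=0$ using $l^{2}=l$, $m^{2}=m$, $lm=ml=0$. The only cosmetic difference is that the paper performs just the single substitution $(mX,mY)$, after which $N_{F}(lX,lY)=0$ follows immediately from the displayed relation, whereas you carry out both substitutions --- a harmless redundancy.
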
 

\begin{proof}
$i)$ Suppose that the $F$-structure is a completely integrable, i.e. $D_{m}$ is integrable and $F$-structure is partially integrable. Using \eqref{eq_1.7}, \eqref{eq_2.1} and \eqref{eq_WW}, we get \eqref{eq_YY}.\\
$ii)$ Conversely, assume that \eqref{eq_YY} is satisfied. Using \eqref{eq_2.1}, we find
\begin{eqnarray*}
N_{F}(lX, lY)+N_{F}(mX, mY)=0.
\end{eqnarray*}
In this relation we replace $X, Y$ with $mX, mY$ respectively, we get \eqref{eq_1.7}, as well \eqref{eq_WW}, i.e. $D_{m}$ is integrable and $F$-structure is partially integrable, hence the $F$-structureis is completely integrable.
\end{proof}

\begin{theorem}\label{th_13}Given an $F$-structure manifold $M$. In order that the $F$-structure to be integrable, it is necessary and sufficient that the $F$-structure is completely integrable and
\begin{eqnarray*}
N_{F}(lX ,mY)=- N_{F}(mX, lY).
\end{eqnarray*}
for any vector fields $X$ and $Y$  on $M$.
\end{theorem}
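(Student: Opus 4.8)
The plan is to prove both implications by reducing everything to the block decomposition of the Nijenhuis tensor already recorded in \eqref{eq_2.1}, namely
\begin{eqnarray*}
N_{F}(X,Y)=N_{F}(lX,lY)+N_{F}(lX,mY)+N_{F}(mX,lY)+N_{F}(mX,mY),
\end{eqnarray*}
together with the characterization of complete integrability supplied by Theorem \ref{th_12}. The guiding observation is that complete integrability is precisely the statement that the two ``diagonal'' blocks $N_{F}(lX,lY)$ and $N_{F}(mX,mY)$ both vanish (this is what the proof of Theorem \ref{th_12} extracts from \eqref{eq_YY}), so that after assuming complete integrability the entire tensor is carried by the two ``cross'' terms $N_{F}(lX,mY)$ and $N_{F}(mX,lY)$. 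The supplementary hypothesis of the theorem is then exactly what is needed to annihilate these.

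For the necessity, I would assume the $F$-structure is integrable, i.e. $N_{F}(X,Y)=0$ identically. Replacing $X,Y$ by $lX,lY$ and by $mX,mY$ shows in particular that $N_{F}(lX,lY)=0$ and $N_{F}(mX,mY)=0$, whence \eqref{eq_YY} holds trivially (both sides are zero) and Theorem \ref{th_12} gives complete integrability. The cross-term identity $N_{F}(lX,mY)=-N_{F}(mX,lY)$ is then immediate, since both sides vanish.

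For the sufficiency, I would start from complete integrability and invoke Theorem \ref{th_12} to obtain \eqref{eq_YY}, that is $N_{F}(X,Y)=N_{F}(lX,mY)+N_{F}(mX,lY)$. Substituting the assumed relation $N_{F}(lX,mY)=-N_{F}(mX,lY)$ into the right-hand side makes the two cross terms cancel, so $N_{F}(X,Y)=0$ for all vector fields $X,Y$, i.e. the $F$-structure is integrable.

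I do not expect a genuine obstacle: the statement is essentially a bookkeeping consequence of the quadratic decomposition of $N_{F}$ under the complementary projectors $l$ and $m$ from Lemma \ref{lem_1}. The only point requiring mild care is, in the necessity direction, to read complete integrability off Theorem \ref{th_12} rather than re-deriving it from scratch; once one notes that complete integrability forces the two diagonal blocks to vanish while full integrability forces all four blocks to vanish, each implication collapses to a single line.
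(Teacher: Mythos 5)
Your proof is correct and takes exactly the route the paper intends: Theorem \ref{th_13} is stated in the paper without any proof, evidently as an immediate corollary of Theorem \ref{th_12} together with the decomposition \eqref{eq_2.1}, which is precisely the reduction you carry out. Both of your implications check out, including the supporting observation that, by Theorems \ref{th_10}, \ref{th_11} and \ref{th_7}, complete integrability amounts to the vanishing of the two diagonal blocks $N_{F}(lX,lY)$ and $N_{F}(mX,mY)$.
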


\section{Special cases}\label{Sec7}
\subsection*{Case.1} If $\alpha=0$, $\beta=1$ and $K=3$. The $F$-structure satisfying $\eqref{eq_A}$ becomes of the form 
\begin{eqnarray*}
F^{3}+F=0,	
\end{eqnarray*}	
which was studied in \cite{D.R,I.Y,Yan1,Yan2,Y.K}.

\subsection*{Case.2} If $\alpha=0$, $\beta=-1$ and $K=3$. The $F$-structure satisfying $\eqref{eq_A}$ becomes of the form 
\begin{eqnarray*}
F^{3}-F=0,	
\end{eqnarray*}	
which was studied in \cite{D.R,Mat,Pok,S.V}.

\subsection*{Case.3} If $\alpha=0$, $\beta=\dfrac{1}{\lambda^{2}}$ and $K=3$. The $F$-structure satisfying $\eqref{eq_A}$ becomes of the form 
\begin{eqnarray*}
F^{3}+\lambda^{2}F=0,	
\end{eqnarray*}	
which was studied in \cite{U.G}.

\subsection*{Case.4} If $\alpha=0$, $\beta=1$ and $K=5$. The $F$-structure satisfying $\eqref{eq_A}$ becomes of the form 
\begin{eqnarray*}
F^{5}+F=0,	
\end{eqnarray*}	
which was studied in \cite{And1,And2,Sin2}.

\subsection*{Case.5} If $\alpha=0$, $\beta=-1$ and $K=5$. The $F$-structure satisfying $\eqref{eq_A}$ becomes of the form 
\begin{eqnarray*}
F^{5}-F=0,	
\end{eqnarray*}	
which was studied in \cite{Aqe,Pok}.

\subsection*{Case.6} If $\alpha=1$ and $\beta=0$. The $F$-structure satisfying $\eqref{eq_A}$ becomes of the form 
\begin{eqnarray*}
F^{K+1}+F=0,	
\end{eqnarray*}	
which was studied in \cite{Gup}.

\subsection*{Case.7} If $\alpha=0$ and $\beta=(-1)^{K+1}$. The $F$-structure satisfying $\eqref{eq_A}$ becomes of the form 
\begin{eqnarray*}
F^{K}+(-1)^{K+1}F=0,	
\end{eqnarray*}	
which was studied in \cite{Das1,Das2}.

\subsection*{Case.8} If $\alpha=0$, $\beta=1$ and $K=p_{1}p_{2}$. The $F$-structure satisfying $\eqref{eq_A}$ becomes of the form 
\begin{eqnarray*}
F^{p_{1}p_{2}}+F=0,	
\end{eqnarray*}	
which was studied in \cite{S.G1}.

\subsection*{Case.9} If $\alpha=0$, $\beta=1$ and $K=p^{2}+2$. The $F$-structure satisfying $\eqref{eq_A}$ becomes of the form 
\begin{eqnarray*}
F^{p^{2}+2}+F=0,	
\end{eqnarray*}	
which was studied in \cite{S.G2}.

\section{Examples}\label{Sec8}
\begin{example}\label{ex_1}
On $M= \{(x, y)\in\mathbb{R}^{2},\; y\neq0\}$ ($2$-dimensional manifold), we define the tensor  $F$ of type $(1,1)$, by
$$F=\left(\begin{array}{ccc}
-1& y \\
\dfrac{-1}{y}& 2
\end{array}\right).$$
It is easy to find out that  $rank(F)=2$ and for $\alpha=1,\beta=-2, K=3$, we find 
$$F^{4} -2F^{3} +F=0,$$
$$l=-F^{3}+2F^{2}=I,\quad m=I-l=0,$$
$$(D_{l})_{(x, y)}= T_{(x, y)}M,\quad (D_{m})_{(x, y)}= \left\{0\right\}.$$
It's easily verified that 
\begin{eqnarray*}
N_{F}(\partial_{x}, \partial_{y})=0,
\end{eqnarray*}
i.e. $F$ is integrable  (partially, completely) integrable, as well $D_{l}$ and  $D_{m}$ are integrable.
\end{example}

\begin{example}\label{ex_2}
Let $(x_{1}, x_{2})$ be the Cartesian coordinates of $\mathbb{R}^{2}$ and $F$ be a tensor of type $(1,1)$, defined by
$$F=\left(\begin{array}{ccc}
1& -1 \\
1& -2
\end{array}\right).$$
It is easy to find out that  $rank(F)=2$ and for $\alpha=-1,\beta=-2, K=3$, we find 
$$-F^{4} -2F^{3} +F=0,$$
$$l=F^{3}+2F^{2}=I,\quad m=I-l=0,$$
$$(D_{l})_{x}= T_{x}\mathbb{R}^{2},\quad (D_{m})_{x}= \left\{0\right\}.$$
where $x=(x_{1}, x_{2})\in \mathbb{R}^{2}$, it's easily verified that 
\begin{eqnarray*}
N_{F}(\partial_{x_{i}}, \partial_{x_{j}})= [F\partial_{x_{i}}, F\partial_{x_{j}}] - F[F\partial_{x_{i}}, \partial_{x_{j}}] -F[\partial_{x_{i}}, F\partial_{x_{j}}] + F^{2}[\partial_{x_{i}}, \partial_{x_{j}}]=0,
\end{eqnarray*}
for all $i,j = 1, 2 $, i.e. $F$ is integrable  (partially, completely) integrable, as well $D_{l}$ and  $D_{m}$ are integrable.\\
We put $\widehat{F}^{2}=-F^{3} -F^{2}=-I$, i.e. $\widehat{F}=(-I)^{1/2}=\left(\begin{array}{ccc}
a& b \\
\dfrac{1+a^{2}}{b}& -a
\end{array}\right)$, where $a, b$ are real constants and $b\neq0$. Because $\widehat{F}$ is integrable, then 
\begin{eqnarray*}
H&=&\left\lbrace X-j\widehat{F}X,\; X\in D_{l}\right\rbrace=\left\lbrace X-j\left(\begin{array}{ccc}
a& b \\
\dfrac{1+a^{2}}{b}& -a
\end{array}\right)X,\; X\in T\mathbb{R}^{2}\right\rbrace\\
&=&	\left\lbrace \left(\begin{array}{cc}
x-j(ax+by) \\
y+j(\dfrac{1+a^{2}}{b}x+ay)
\end{array}\right),\; x, y \in \mathbb{R}\right\rbrace
\end{eqnarray*}	
is $CR$-structure.
\end{example}

\begin{example}\label{ex_3}
On $M= \{x=(x, y, z)\in\mathbb{R}^{3},\; x\neq0\}$ ($3$-dimensional manifold), we define the tensor  $F$ of type $(1,1)$, by
$$F=\left(\begin{array}{cccc}
0& 0& x\\
0& 0&0\\
\dfrac{-1}{x}&0&-1
\end{array}\right).$$
It is easy to find out that  $rank(F)=2$ and for $\alpha=\beta=1, K=5$, we find 
$$F^{6} +F^{5} +F=0,$$
$$l=-F^{5}-F^{4}=\left(\begin{array}{cccc}
1& 0& 0\\
0& 0&0\\
0&0&1
\end{array}\right),\quad m=I-l=\left(\begin{array}{cccc}
0& 0& 0\\
0& 1&0\\
0&0&0
\end{array}\right),$$
$$(D_{l})_{(x, y, z)}= Span\left\{\partial_{x}, \partial_{z} \right\},\quad (D_{m})_{(x, y, z)}= Span\left\{\partial_{y}\right\}.$$
It's easily verified that 
\begin{eqnarray*}
N_{F}(\partial_{x}, \partial_{y})=N_{F}(\partial_{x}, \partial_{z})=N_{F}(\partial_{y}, \partial_{z})=0,
\end{eqnarray*}
i.e. $F$ is integrable  (partially, completely) integrable, as well $D_{l}$ and  $D_{m}$ are integrable.
\end{example}

\begin{example}\label{ex_4}
On $M= \{(x, y, z, t)\in\mathbb{R}^{4},\; x\neq0\}$ ($4$-dimensional manifold), we define the tensor  $F$ of type $(1,1)$, by
$$F=\left(\begin{array}{cccc}
1& 0& 0& -x\\
0& 1&\dfrac{-1}{x} & 0\\
0& x& 0& 0\\
\dfrac{1}{x}& 0& 0& 0
\end{array}\right).$$
It is easy to find out that  $rank(F)=4$ and for $\alpha=-1,\beta=1, K=5$, we find
$$-F^{6} +F^{5}+F=0,$$ 
$$l=-(-F^{5} +F^{4})=I,\quad m=I-l=0,$$
$$(D_{l})_{(x, y, z, t)}= T_{(x, y, z, t)}M,\quad (D_{m})_{(x, y, z, t)}= \left\{0\right\}.$$
For all vector fields $X$ and $Y$  on $M$, we have, $mN_{F}(X,Y)=lN_{F}(mX,mY)=0$, i.e. $D_{l}$ and $D_{m}$ are both integrable.
\begin{eqnarray*}
N_{F}(\partial_{z}, \partial_{t})&=& [F\partial_{z}, F\partial_{t}] - F[F\partial_{z}, \partial_{t}] -F[\partial_{z}, F\partial_{t}] + F^{2}[\partial_{z}, \partial_{t}]\\
&=&\left[ \dfrac{-1}{x}\partial_{y}, -x\partial_{x}\right]  - F\left[ \dfrac{-1}{x}\partial_{y}, \partial_{t}\right]  -F\left[ \partial_{z}, -x\partial_{x}\right]  + 0\\
&=&-\dfrac{1}{x}\partial_{y}\neq 0,
\end{eqnarray*}
hence $F$ is not integrable. On the other hand, we have
$$N_{F}(l\partial_{z},l\partial_{t})=N_{F}(\partial_{z},\partial_{t})\neq 0,$$ 
then $F$ is not partially (completely) integrable.
\end{example}


\end{document}